\newcommand{\Nat}{\mathbb{N}}
\newcommand{\Int}{\mathbb{Z}}
\newcommand{\defeq}{\mathrel{\mathop:}=}
\newcommand{\ConfSink}[1]{\Conf_{#1}^\mathrm{sink}}
\numberwithin{thmcounter}{section}
\newaliascnt{thmauto}{thmcounter}
\newaliascnt{conjauto}{thmcounter}
\newaliascnt{defauto}{thmcounter}
\newaliascnt{exauto}{thmcounter}
\newaliascnt{lemauto}{thmcounter}
\newaliascnt{propauto}{thmcounter}
\newaliascnt{corauto}{thmcounter}
\newaliascnt{remauto}{thmcounter}
\theoremstyle{plain}
\newtheorem{conj}[conjauto]{Conjecture}
\newtheorem{prop}[propauto]{Proposition}
\newtheorem{thmA}{Theorem}
\theoremstyle{definition}
\newtheorem{definition}[defauto]{Definition}
\theoremstyle{remark}
\newtheorem{rem}[remauto]{Remark}
\newtheorem*{note*}{Note}
\let\originalleft\left
\let\originalright\right
\renewcommand{\left}{\mathopen{}\mathclose\bgroup\originalleft}
\renewcommand{\right}{\aftergroup\egroup\originalright}
\def\polhk#1{\setbox0=\hbox{#1}{\ooalign{\hidewidth
    \lower1.5ex\hbox{`}\hidewidth\crcr\unhbox0}}}
\DeclareMathOperator{\Conf}{Conf}
\DeclareMathOperator{\Star}{Star}
\DeclareMathOperator{\HH}{H}
\DeclareMathOperator{\map}{map}
\newcommand{\particle}[1]{\begin{scope}[shift={#1}]\draw[fill] (0,0) circle (0.1);\end{scope}}
\newcommand{\LST}[1]{\mathbf{#1}}
\newcommand{\particleNr}[2]{\begin{scope}[shift={#1}]\node[fill=white,draw,circle,inner sep=1pt] at
  (0,0) {\small #2};\end{scope}}
\begin{document}
\title{The Homology of Configuration Spaces of Trees with Loops}

\author{Safia Chettih}
\address{Department of Mathematics, Reed College, Oregon, USA}
\email{safia@reed.edu}

\author{Daniel Lütgehetmann}
\address{Institut für Mathematik, Freie Universität Berlin, Germany}
\email{daniel.luetgehetmann@fu-berlin.de}

\keywords{configuration spaces; graphs}

\begin{abstract}
  We show that the homology of ordered configuration spaces of finite trees with loops is torsion
  free.
  We introduce configuration spaces with sinks, which allow for taking quotients of the base
  space.
  Furthermore, we give a concrete generating set for all homology groups of configuration spaces of
  trees with loops and the first homology group of configuration spaces of general finite graphs.
  An important technique in the paper is the identification of the $E^1$-page and differentials of
  Mayer-Vietoris spectral sequences for configuration spaces.
\end{abstract}

\maketitle

\section{Introduction}
For a topological space $X$ and a finite set $S$ we define the \emph{configuration space of
$X$ with particles labeled by $S$} as
\[ \Conf_S(X) \defeq \left\{f\colon S\to X \text{ injective} \right\} \subset \map(S, X). \]
For $n\in\Nat$ we write $\mathbf{n}\defeq \{1, 2, \ldots, n\}$ and $\Conf_n(X)\defeq
\Conf_{\mathbf{n}}(X)$.
This is usually called the $n$-th ordered configuration space of $X$.
Let $G$ be a finite connected graph (i.e.\ a connected 1-dimensional CW complex with finitely many
cells). We are interested in the homology of configurations of $n$ ordered particles in $G$, that
is, $H_*(\Conf_n(G))$.

\vspace{1ex}
A main ingredient in proving results about configurations in graphs is the existence of
combinatorial models for the configuration spaces.
In \cite{Abrams00}, Abrams introduced a discretized model for the configuration space of $n$ points
in a graph which is a cubical complex, allowing the spaces to be studied using techniques from
discrete Morse theory and connecting them with right-angled Artin groups (see \cite{Farley05},
\cite{Crisp04}).
A similar discretized model for non-$k$-equal configuration spaces in a graph, where up to $k-1$
points are allowed to collide, was constructed in \cite{Chettih16}, providing inspiration for the
configuration with sinks introduced in this paper.

Not long after the introduction of Abrams' model, {\'S}wi{\polhk{a}}tkowski introduced a cubical
complex which is a deformation retract of the space of unordered configurations of $n$ points in a
graph (see \cite{Swiat01}).
In this model, instead of the points moving discrete distances along the graph, the points move from
an edge to a vertex of valence at least two or vice versa.
This gives a sharper bound for the homological dimension of these configuration spaces as the
dimension of the complex is bounded from above by the number of vertices in the graph (see \cite{Ghrist01}, \cite{Farley05} for proofs that this bound also holds for Abrams' model).
An analogous model holds for ordered configurations (see \cite{Luetgehetmann14}), by keeping track
of the order of points on an edge.
The combinatorial model for configurations with sinks has structure similar to the latter models.

In order to describe the homology of $\Conf_n(G)$ we will compare it to a modified version of
configuration spaces: we add ``sinks'' to our graphs.
Sinks are special vertices in the graph where we allow particles to collide.
For ordinary configuration spaces, if we collapse a subgraph $H$ of $G$ then this does \emph{not} induce a map
\[ \Conf_n(G)\dashrightarrow\Conf_n(G/H)\]
because some of the particles could be mapped to the same point in $G/H$.
If, however, we turn the image of $H$ under $G\to G/H$ into a sink, there is now an induced map on
configuration spaces.

Our first theorem shows that in the \emph{ordered} case, there is no torsion and a geometric
generating system for a large class of finite graphs.

\begin{definition}
  A finite connected graph $G$ is called a \emph{tree with loops} if it can be constructed as an
  iterated wedge of star graphs and copies of $S^1$.
\end{definition}

\begin{definition}
  A homology class $\sigma\in H_q(\Conf_n(G))$ is called the \emph{product of classes $\sigma_1\in
    H_{q_1}(\Conf_{T_1}(G_1))$ and $\sigma_2\in H_{q_2}(\Conf_{T_2}(G_2))$} for $q_1+q_2=q$ if it is
    the image of $\sigma_1\otimes \sigma_2$ under the map
  \[
    H_q(\Conf_n(G_1\sqcup G_2)) \to H_q(\Conf_n(G))
  \]
  induced by an embedding $G_1\sqcup G_2\hookrightarrow G$.
  Analogously, iterated products are induced by embeddings $G_1\sqcup G_2 \sqcup \ldots \sqcup G_n \hookrightarrow G$.

  For $k\ge 3$ let $\Star_k$ be the star graph with $k$ leaves, $\HH$ the tree with two vertices
  of valence three and $S^1$ a circle with one vertex of valence 2.
  We call a class $\sigma\in H_q\left( \Conf_n(G) \right)$ a \emph{product of basic classes} if
  $\sigma$ is an iterated product of classes in groups of the form $H_j(\Conf_{n_i}(G_i))$ where
  $j$ equals 0 or 1 and $G_i$ is a star graph, the $\HH$-graph, the circle $S^1$ or the interval
  $I$.
\end{definition}

\begin{thmA}
  \label{thm:trees}
  Let $G$ be a tree with loops and let $n$ be a natural number.
  Then the integral homology $H_q\left( \Conf_n(G); \Int \right)$ is torsion-free and generated by
  products of basic classes for each $q\ge0$
\end{thmA}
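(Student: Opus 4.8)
The plan is to prove a stronger statement for configuration spaces \emph{with sinks} and to induct on the combinatorial complexity of $G$ --- say the total number of essential vertices and embedded loops, equivalently the number of star- and $S^1$-summands in a presentation of $G$ as an iterated wedge. Passing to sinks is essential here: as explained in the introduction, the collapse maps only exist once the collapsed subgraph is turned into a sink, and it is exactly these maps that drive the induction. In the base case $G$ is one of the basic graphs $\Star_k$, $\HH$, $S^1$, or $I$ (possibly carrying a sink), and I would compute $H_*(\Conf_n(G))$ directly for all $n$, checking both that it is free and that a basis is given by the declared basic classes; this anchors the generation statement.

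For the inductive step I would write $G = G_0 \vee_v S$, where $S$ is a single star or circle attached at a vertex $v$ and $G_0$ is a tree with loops (with a sink placed at $v$) of strictly smaller complexity. The point $v$ organizes configurations according to how the particles distribute between $G_0$ and the newly attached piece $S$, and the collapse map $\Conf_n^{\mathrm{sink}}(G)\to \Conf_n^{\mathrm{sink}}(G_0)$ records this. I would build an open cover of $\Conf_n^{\mathrm{sink}}(G)$ adapted to this splitting --- the pieces recording which labels are confined to a neighborhood of $S$ --- and run the associated Mayer--Vietoris spectral sequence. The heart of this step is identifying the $E^1$-page: after deformation retracting, each term should become a direct sum, over partitions $S_0\sqcup S_1$ of the label set, of tensor products $H_*(\Conf_{S_0}^{\mathrm{sink}}(G_0))\otimes H_*(\Conf_{S_1}(S))$. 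The Künneth theorem applies with no $\mathrm{Tor}$-terms precisely because the inductive hypothesis makes both factors torsion-free, so the $E^1$-page is a free abelian group.

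The crux is then to identify the $d^1$-differentials explicitly and to show that the spectral sequence has a \emph{free} $E^\infty$-page; this is the technical heart advertised in the abstract. Concretely, I would show that each differential, written in the tensor-product basis coming from Künneth, is represented by an integer matrix whose Smith normal form has entries in $\{0,1\}$ --- equivalently, that every differential has free image and free cokernel. Granting this, each $E^2=E^\infty$ term is free; and since an extension of a free abelian group by a free abelian group is again free, $H_*(\Conf_n^{\mathrm{sink}}(G);\Int)$ is torsion-free, and specializing to the sink-free case yields the theorem. For the generation statement I would track basis elements through the identification: a surviving class is the product of a class on $G_0$ (a product of basic classes by induction) with a class on $S$ (a star, circle, or interval class, hence basic), while the genuinely new classes created when the attaching edge interacts with an existing branch of $G_0$ are exactly $\HH$-graph classes, which are basic by definition.

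The main obstacle I anticipate is establishing the $\{0,1\}$ Smith-normal-form property of the differentials uniformly in $n$ and in the complexity of $G$. This forces one to pass from the abstract Mayer--Vietoris differential to a combinatorially explicit map, which in turn requires the precise geometric description of the basic generators in a neighborhood of $v$ and careful bookkeeping of how the star-type classes on the two sides of the cut cancel against or combine with one another. Controlling this cancellation --- rather than merely knowing the $E^1$-page is free --- is what guarantees that no torsion is introduced and that the surviving generators are honestly products of basic classes.
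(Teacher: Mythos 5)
Your outline matches the paper's top-level strategy---generalize to configuration spaces with sinks, induct on the number of essential vertices, split $G$ near a wedge point into a star-with-loops piece $L$ and a smaller piece $K$, and identify the $E^1$-page of the resulting Mayer--Vietoris spectral sequence via K\"unneth---but the two steps you yourself flag as ``the crux'' and ``the main obstacle'' are exactly where the theorem's content lies, and your proposal both leaves them open and asserts something false along the way. Concretely, you claim that once each $d^1$ has free image and cokernel, ``each $E^2=E^\infty$ term is free.'' The spectral sequence is not claimed (and not known) to degenerate at $E^2$, and freeness of $E^2$ alone says nothing about higher differentials. What the paper proves instead is a splitting that persists across all pages: the $E^1$-page decomposes according to the homological degree $q_L\in\{0,1\}$ of the $L$-factor; the $q_L=1$ summand has homology concentrated in the zeroth column, while the $q_L=0$ summand is \emph{literally} the $E^1$-page of the spectral sequence for the quotient $(G/L)$ with $L$ collapsed to a sink, whose convergence to a free, basic-class-generated $E^\infty$-page is the induction hypothesis (this is where sinks genuinely drive the induction). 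Even then, a column-$0$ summand can still \emph{receive} differentials $d_r$, $r\ge 2$, from the $G/L$-summand, and ruling this out requires the paper's explicit path-lifting argument (the paths $\gamma[\nu,\nu']$ through the central vertex of $L$, which compose additively and hence produce only trivial loops). Without that argument your freeness conclusion for $E^\infty$, and hence the torsion-freeness of $H_*(\Conf_n(G))$, does not follow.

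Second, your Smith-normal-form program is a restatement of the difficulty, not a method: the paper never analyzes integer matrices, but instead constructs, for the connecting leaf $e$ of $L$, a compatible system of spanning trees in the combinatorial models of $\ConfSink{S}(L,W_L)$ for all label sets $S$, stable under adding/forgetting particles at the end of $e$ and under permuting them. The resulting bases $\mathcal{B}_S$ of $H_1$ have the property that $d^1$ preserves the ``minimal representative'' of the $L$-factor, so the $q_L=1$ rows split as chain complexes indexed by minimal classes $\sigma_0$, and each such complex is isomorphic to the $E^1$-page of a cover of $\ConfSink{\LST{n}-S}(K,W_K)$ in which one open set is the whole space---which is why its homology sits in column $0$ and equals $\Int_{\sigma_0}\otimes H_{*}(\ConfSink{\LST{n}-S}(K,W_K))$. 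Producing this basis (and verifying it splits the differential) is the technical heart you would still have to supply. Finally, your base case is off: the $\HH$-graph has \emph{two} essential vertices, so the paper handles it through the induction rather than by direct computation (that $H_2(\Conf_n(\HH))$ is a sum of products of star classes is an output of the induction, not an input), and the paper's actual base case---one essential vertex, arbitrary $n$, arbitrary valence-one sinks---is itself not a direct computation: freeness comes from $1$-dimensionality of the combinatorial model, and generation by basic classes requires a Mayer--Vietoris argument that invokes the first-homology result for general graphs proved separately. Computing $H_*(\Conf_n(\HH))$ for all $n$ by hand, as you propose, is a substantial task your sketch does not address.
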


A 1-class in $S^1$ moves all particles around the circle, a 1-class in a star graph uses the
essential vertex to shuffle around the particles, and a 1-class in the $\HH$-graph uses one of the
vertices to reorder the particle and then undoes this reordering using the other vertex.
The proof of \autoref{thm:trees} will show that 2-classes in an $\HH$-graph are given by sums of
products of 1-classes in the two stars, and there are no higher dimensional classes in these three
types of graphs.

The proof of \autoref{thm:trees} rests on an inductive argument on the number of essential vertices
of a graph.
We construct a basis for the configuration space of a star graph with loops such that the $E^1$-page
of the Mayer-Vietoris spectral sequence induced by our gluing splits over that basis.
We can identify a part of the homology of the $E^1$-page with configuration spaces where some of the
points have been forgotten, and the rest of the homology with a configuration space where the star
graph has been collapsed to a sink (see \autoref{sec:conf-with-sinks} for the definition of sink
configuration spaces).
The gluing process does not create torsion, so torsion-freeness follows from explicit calculations
of the homology of ordered configurations in star graphs with loops.
An explicit generating set of homology classes with known relations is essential to our proof.
A basis for the homology of ordered configurations of two points in a tree was first constructed in
\cite{Chettih16}, which highlighted the role of basic classes of the $\HH$ graph in the
configuration space of wedges of graphs. See also \cite{BF09} and \cite{FH10} for descriptions of
product structure in configurations of two points on planar and non-planar graphs.
The Mayer-Vietoris principle was previously used to compute the homology of (unordered)
configuration spaces of graphs in \cite{MaSa16}.

For more general graphs, the analogous theorems do not hold:
\begin{thmA}\label{thm:non-product-general-graph}
  If $G$ is any finite graph and $n$ a natural number, then the \emph{first} homology group
  $H_1(\Conf_n(G))$ is generated by basic classes.
  However, for each $i\ge2$ there exists a finite graph $G$ and a number $n$ such that
  $H_i(\Conf_n(G))$ is not generated by products of 1-classes.
\end{thmA}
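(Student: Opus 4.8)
I would prove the two assertions separately, since they call for different techniques. For the first assertion, that $H_1(\Conf_n(G))$ is generated by basic classes for every finite graph $G$, the plan is to induct on the number of essential vertices (vertices of valence at least three) using a Mayer--Vietoris decomposition of the same flavour as in the proof of \autoref{thm:trees}. I would fix an essential vertex $v$, write $G$ as the union of a star neighbourhood $\Star$ of $v$ with the closure of its complement, glued along the finitely many points where they meet, and consider the resulting Mayer--Vietoris spectral sequence for $\Conf_n(G)$. Only the groups $H_0$ and $H_1$ of the configuration spaces of the two pieces and of their (discrete) intersection feed into $H_1$. The base cases are direct computations: for the interval $H_1$ vanishes, for the circle it is spanned by the circle $1$-class, and for a single star it is spanned by the star $1$-classes. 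By induction each piece contributes only basic classes, and the only new classes produced by the $d^1$-differential---those mixing a $1$-class supported near $v$ with a $1$-class supported in the complement across a shared edge---are precisely $\HH$-graph $1$-classes, hence again basic. Reading off $E^2_{*,1}$ then gives the claim.

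For the second assertion I would produce explicit counterexamples. The feature of a tree with loops that drives \autoref{thm:trees} is that its essential vertices are joined in a tree-like pattern; the natural way to break the conclusion is to place the essential vertices on a common cycle, which an iterated wedge of stars and circles can never do. Concretely, for each $i\ge 2$ I would let $G_i$ be the graph obtained from a single cycle by attaching one leaf at each of $i$ distinct points, so that $G_i$ has exactly $i$ essential vertices of valence three lying on one cycle, and I would take $n_i$ particles with $n_i$ large enough to perform a local move simultaneously at every essential vertex. The {\'S}wi{\polhk{a}}tkowski-type dimension bound gives $H_q(\Conf_{n_i}(G_i))=0$ for $q>i$, so $H_i$ is the top group, and I would exhibit an explicit $i$-cycle $z$ that couples one local $1$-move at each essential vertex with the transport of particles around the cycle, then verify at chain level that $[z]\ne 0$.

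The crux---and the step I expect to be the main obstacle---is showing that $[z]$ is not a product of $1$-classes. Any such product is supported on an embedded disjoint union $G_1\sqcup\cdots\sqcup G_i\hookrightarrow G_i$, so each tensor factor is confined to a subgraph meeting the cycle in at most one arc and never transports a particle all the way around; the class $[z]$, by contrast, carries a nontrivial monodromy around the cycle. I would make this precise with the sink-configuration-space quotient maps of \autoref{sec:conf-with-sinks}: collapsing every essential vertex but one to a sink should send any product whose factors are distributed around the cycle to a degenerate class, while $[z]$ maps to a nonzero coordinated class, so that no linear combination of products can equal $[z]$. The technical difficulty is that verifying this requires an honest cellular computation in the combinatorial model---either tracking the image of the whole product subspace under the full family of collapse maps, or, as a fallback, comparing $\dim H_i(\Conf_{n_i}(G_i))$ with the dimension of the span of products by a direct rank count; the cyclic symmetry of $G_i$ should organise the bookkeeping but will not remove it.
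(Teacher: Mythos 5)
Your proposal splits into two halves, and both have real gaps. For the first statement, your inductive scheme (star neighbourhood of an essential vertex $v$ glued to the closure of its complement) hides the actual difficulty and miscounts the spectral sequence. A two-set cover $\{L,K\}$ of $G$ induces a cover of $\Conf_n(G)$ by $2^n$ open sets $U_\phi$, one for each $\phi\colon\mathbf{n}\to\{L,K\}$, so contributions to $H_1$ come not only from the row $q=1$ but from $E^\infty_{1,0}$, i.e.\ from $H_0$ of double intersections modulo the image of $H_0$ of triple intersections. That is exactly where the new $1$-classes live, and they are \emph{not} only $\HH$-classes: whenever $v$ lies on a cycle of $G$ (the only interesting case), a particle travelling around that cycle is a circle class visible in neither piece --- the very ``monodromy'' classes you invoke in your second half are the ones your first half omits. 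The paper's induction is organised differently precisely to tame this: it cuts $G$ at midpoints of edges $e_1,\dots,e_\ell$ to obtain a spanning tree $K$, restricts all but one particle to $K$, releases particles one at a time, and covers the space by the sets $U_{\pm e_i}$ constraining only the released particle $s$ (\autoref{prop:identification-intersection-general-graph}, \autoref{prop:1cycles-general-graph-induction}); then every multiple intersection is homotopy equivalent to a space already handled by induction, and the only kernel elements $(x,-x)$ in $H_0(U_{+e_i}\cap U_{-e_i})$ are literally circle classes of the single particle $s$. Combined with \autoref{thm:trees} for the tree $K$, this gives the first statement; your version could perhaps be repaired along these lines, but the step ``the only new classes are $\HH$-classes'' is where the content lies, and as stated it is false.

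The second half has the more serious gap, which you yourself flag as unresolved: you never show your class $[z]$ is not a sum of products, and your choice of examples makes this genuinely hard, because the cycle-with-$i$-whiskers graph with $n_i\ge 2i$ particles carries many degree-$i$ products (for instance products of star $1$-classes at the $i$ essential vertices, which are pairwise disjoint); your collapse-to-sinks idea does not obviously separate $[z]$ from these, since products of star classes also survive collapse maps. The paper sidesteps the issue entirely by choosing the example so that \emph{no} products exist in the relevant degree: in $\Conf_3(B_3)$, where $B_3$ is the rank-three banana graph, every embedded circle passes through both vertices, and two disjoint star or $\HH$ classes would require at least four particles, so in degree $2$ there are no products of $1$-classes at all. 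Hence any nonzero $2$-class is automatically a counterexample; since the combinatorial model is $2$-dimensional, it suffices to exhibit a nonzero cellular $2$-cycle, which the paper does by an explicit sign cancellation (the signed sum of the four $\Star_3$-cycles inside $\Star_4$ vanishes, allowing twelve cylinders to close up into a cycle). Higher degrees are then obtained by multiplying this single example by $k$ circle classes on $k$ disjoint circles wedged onto $v$ with $k$ extra particles --- not by enlarging the cycle. Until you can actually rule out products for your graphs $G_i$, or switch to examples where products are excluded by a particle count as above, your proposal does not prove the second statement.
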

We provide explicit examples for the second statement.
Abrams and Ghrist were aware of the second part of this result in 2002 (\cite{AbramsGhrist02}), but
their example does not generalize to arbitrary dimensions.
More specifically, they showed that $\Conf_2(K_5)$ and $\Conf_2(K_{3,3})$ are homotopic to surfaces
of genus $6$ and $4$ respectively, where $K_5$ is the complete graph on five vertices and $K_{3,3}$
is the complete bipartite graph on $3+3$ vertices.

\vspace{1em}

Both theorems above can be generalized to the case where arbitrary subsets of the vertices are
turned into sinks.

In between versions of this paper, Ramos considered configurations where all the vertices of a graph
are sinks, approaching them through the lens of representation stability (\cite{Ramos17}). His
theorems concerning torsion-freeness and bounds on homological dimension are special cases of the
theorems above.

In an earlier version of this paper, we asserted torsion-freeness for arbitrary finite graphs.
However, our proof relied on a basis which we discovered does not split the Mayer-Vietoris spectral
sequence in the way we described it.
Our investigation of obstructions to constructing an appropriate basis led us to the counterexamples in \autoref{thm:non-product-general-graph}.
Such a basis may still exist, and we believe the following:
\begin{conj}\label{conj:torsion-free}
Let $G$ be a finite graph and $n$ a natural number. Then the integral homology
$H_q(\Conf_n(G);\Int)$ is torsion-free for each $q \geq 0$.
\end{conj}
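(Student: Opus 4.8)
The plan is to prove \autoref{conj:torsion-free} by induction on a complexity measure of $G$, following the same architecture as the proof of \autoref{thm:trees} but now allowing essential vertices that are joined to one another in arbitrary ways. A convenient measure is the number of essential vertices (vertices of valence at least three), with loops and bivalent vertices handled as in the tree-with-loops case; the base case is a graph with no essential vertices, where $\Conf_n(G)$ is homotopy equivalent to a configuration space in a disjoint union of intervals and circles and is manifestly torsion-free. As in the proof of \autoref{thm:trees} it is cleaner to prove the stronger statement allowing arbitrary sinks, so that the sink configuration spaces produced below are themselves covered by the inductive hypothesis. For the inductive step I would fix an essential vertex $v$ and set up the Mayer--Vietoris spectral sequence associated to the cover of $\Conf_n(G)$ by configurations that avoid a small neighbourhood of $v$ and configurations that are allowed to enter the open star of $v$, exactly the type of gluing whose $E^1$-page and differentials are identified in this paper.

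The first substantive step is to show that the $E^1$-page is torsion-free. By the identification of the $E^1$-page carried out in this paper, its entries are built from the homology of configuration spaces in the local star at $v$ together with the homology of configuration spaces in the complementary graph $G'$, in which the star has been collapsed to a sink in the sense of \autoref{sec:conf-with-sinks} and in which some particles may have been forgotten. The star contributions are torsion-free because a star is a tree with loops, so \autoref{thm:trees} applies, while the sink contributions over $G'$ are torsion-free by the inductive hypothesis, once we verify that the collapse strictly decreases the complexity measure (the vertex $v$ is absorbed into a sink and so no longer counts as an essential vertex). Because these constituents are torsion-free over $\Int$, the associated $\mathrm{Tor}$-terms vanish and the Künneth formula shows that each $E^1$-entry is free abelian; hence $E^1$ is torsion-free.

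The crux, and the step at which the earlier version of this paper failed, is to propagate torsion-freeness from $E^1$ to $H_*(\Conf_n(G))$ itself. Two things can go wrong: a differential on some page $E^r$ can have non-free cokernel, producing torsion in $E^{r+1}$, and even a free $E^\infty$-page can assemble into a group with torsion through a non-split extension. For \autoref{thm:trees} this was avoided by exhibiting an explicit geometric basis, built from products of basic classes, with respect to which the differential $d^1$ becomes, after a unimodular change of basis, a direct sum of zero maps and isomorphisms; such a \emph{split} differential forces $E^2 = E^\infty$ to be free and all extensions to split. I would therefore try to construct the analogous adapted basis for general $G$, refining the product basis so that the off-diagonal interactions between distinct essential vertices are absorbed. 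The main obstacle is precisely that this refinement can fail: the non-product classes detected in \autoref{thm:non-product-general-graph}, such as those arising in $\Conf_2(K_5)$ and $\Conf_2(K_{3,3})$, show that the geometry near several mutually adjacent essential vertices does not factor through products, and a naive basis does not diagonalise $d^1$. Making this step work requires either a genuinely new basis that diagonalises the differential in the presence of such entanglement, or a way to bypass bases altogether.

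As a fallback I would pursue two coefficient-theoretic routes that sidestep the explicit basis. The first is to work with a discrete Morse function on the {\'S}wi{\polhk{a}}tkowski or Abrams model and prove that the integral Morse differential is \emph{totally unimodular}, that is, every boundary coefficient between critical cells lies in $\{0, \pm 1\}$ and the resulting matrices have only $0, \pm 1$ minors; a chain complex of free abelian groups with a totally unimodular differential has all invariant factors equal to $1$, so every image is a direct summand and the homology is torsion-free, giving the conjecture directly. The second is a universal-coefficients argument: prove that $\dim_{\mathbb{F}_p} H_q(\Conf_n(G); \mathbb{F}_p)$ is independent of the prime $p$ and equal to the rational Betti number, which by the universal coefficient theorem rules out $p$-torsion for every $p$ and hence all torsion. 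Either route relocates the difficulty to a uniform-in-$p$ Betti number computation, but both avoid the fragile splitting basis, and I would expect the total-unimodularity approach to be the most promising line of attack.
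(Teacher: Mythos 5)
There is a fundamental problem here: the statement you are addressing is \autoref{conj:torsion-free}, which the paper explicitly leaves \emph{open} --- the authors state that their earlier claimed proof ``relied on a basis which we discovered does not split the Mayer--Vietoris spectral sequence,'' and that the counterexamples of \autoref{thm:non-product-general-graph} arose from investigating exactly this failure. Your proposal is a research plan rather than a proof, and the point at which it stops being a proof is precisely the point at which the authors' own retracted argument broke down. Your setup is sound and matches the paper's architecture: induction on essential vertices with sinks allowed, a Mayer--Vietoris cover separating the star of $v$ from the rest, and torsion-freeness of $E^1$ via K\"unneth (the star contributions by \autoref{thm:trees}, the collapsed-graph contributions by induction). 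But the crux --- constructing a system of bases over which the differential splits, so that every page and the filtration extensions remain free --- is stated as a goal, not carried out. In the tree-with-loops case the paper's construction (\autoref{sec:basis-conf-stars}) depends essentially on the adapted leaf structure: basis elements have well-defined minimal representatives obtained by forgetting fixed particles on a leaf, and adding particles in different orders produces distinct basis elements with no relations among them. When essential vertices are connected by several independent cycles this bookkeeping fails, and the non-product classes of \autoref{thm:non-product-general-graph} (for instance the class generating $H_2(\Conf_3(B_3))$, where the configuration space is a genus-13 surface) are concrete obstructions to any basis consisting of products of basic classes. You name this obstacle honestly, but naming it is not overcoming it.

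Your two fallback routes do not close the gap either; both are reformulations of the conjecture rather than attacks on it. Total unimodularity of a discrete Morse differential is a property of a particular choice of Morse function and cell basis, so this route is again a ``find the right basis'' problem, with no candidate function proposed and no reason given why one with $\{0,\pm1\}$-minors should exist for graphs with many cycles. The universal-coefficients route is circular as stated: by the universal coefficient theorem, the equality $\dim_{\mathbb{F}_p} H_q(\Conf_n(G);\mathbb{F}_p) = \dim_{\Rat} H_q(\Conf_n(G);\Rat)$ for all primes $p$ \emph{is} torsion-freeness, so you would be assuming what you want to prove unless you supply an independent, uniform-in-$p$ computation of the mod-$p$ Betti numbers --- which is exactly the hard content the paper says is missing (``more work is needed on relations in the homology of configuration spaces of graphs with many cycles''). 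As it stands, the proposal establishes nothing beyond what \autoref{thm:trees} already gives.
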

To answer this question for general graphs, more work is needed on relations in the homology of
configuration spaces of graphs with many cycles.

The paper proceeds as follows: we introduce a combinatorial model for configurations with ``sinks''
in order to calculate the homology of a few specific examples in \autoref{sec:conf-with-sinks}.
After the Mayer-Vietoris spectral sequence is established in \autoref{sec:mv-spectral-sequence}, we
construct our desired basis and argue inductively by gluing on stars with loops in
\autoref{sec:trees}.
The case of the first homology in an arbitrary graph comprises
\autoref{sec:general-graph}, with counterexamples for higher homology.
Our techniques in this section are substantially different since we no longer have bases which
split the spectral sequence.

\subsection{Acknowledgements}
The second author was supported by the Berlin Mathematical School and the SFB 647 “Space – Time –
Matter” in Berlin.
The authors want to thank Elmar Vogt and Dev Sinha for helpful discussions, and the referee, whose
comments helped us make the paper more readable.

\section{Quotient and Mayer-Vietoris constructions}\label{sec:conf-with-sinks}
In order to describe the homology of $\Conf_n(G)$ we will compare it to a modified version of
configuration spaces: we add ``sinks'' to our graphs.
Sinks are special vertices in the graph where we allow particles to collide, and they enable us to
collapse subgraphs and get an induced map on configuration spaces.
This does not work for ordinary configuration spaces:
if we collapse a subgraph $H$ of $G$ then this does \emph{not} induce a map
\[ \Conf_n(G)\dashrightarrow\Conf_n(G/H)\]
because some of the particles could be mapped to the same point in $G/H$.

For a number $n\in\Nat$, a graph $G$ and a subset $W$ of $G$'s vertices we define the following
configuration space with sinks:
\begin{equation*}
  \ConfSink{n}(G,W) = \left\{(x_1,\ldots,x_n)\,|\, \text{for $i\neq j$ either $x_i\neq x_j$ or
  $x_i=x_j\in W$}\right\}\subset G^n.
\end{equation*}
Looking at the collapse map $G\to G/H$ again, there is now an induced map on configuration spaces if
we turn the image of $H$ under $G\to G/H$ into a sink:
\[ \Conf_n(G) \to \ConfSink{n}(G/H, H/H).\]

\subsection{A combinatorial model}
We can extend the techniques of \cite{Swiat01} and \cite{Luetgehetmann14} to obtain a cube complex
model of configuration spaces with sinks.
More precisely we will define a deformation retraction $r\colon
\ConfSink{n}(G,W)\to\ConfSink{n}(G,W)$ such that the image of $r$ has the structure of a finite cube
complex.
Each axis of such a cube will correspond to the combinatorial movement of one particle.
A combinatorial movement here is either given by the movement from an essential non-sink vertex onto
an edge or along a single edge from one sink to the other.
Each vertex and each such edge can only be involved in one of those combinatorial movements at a
time, so the dimension of this cube complex will be restricted by the number of essential
non-sink vertices and the edges connecting two sinks.

\begin{definition}[{Cube Complex, see \cite[Definition I.7.32]{bh10}}]
    A cube complex $K$ is the quotient of a disjoint union of cubes
    $X=\bigsqcup_{\lambda\in\Lambda}[0,1]^{k_\lambda}$ by an equivalence relation $\sim$ such that
    the quotient map $p\colon X\to X/\!\!\sim\ = K$ maps each cube injectively into $K$ and we only
    identify faces of the same dimensions by an isometric homeomorphism.
\end{definition}
\begin{rem}
  The definition above differs slightly from the original definition by Bridson and Häfliger, in
  that it allows two cubes to be identified along more than one face. This is a necessary property
  for the complex we wish to describe.
\end{rem}

\begin{prop}\label{prop:combinatorial-model-sinks}
  Let $G$ be a finite graph, $W$ a subset of the vertices and $n\in\Nat$.
  Then $\ConfSink{n}(G,W)$ deformation retracts to a finite cube complex of dimension $\min\{n,
  |V_{\ge2}| + |E_W| \}$, where $V_{\ge 2}$ is the set of non-sink vertices of $G$ of valence at
  least two and $E_W$ is the set of edges incident to two sinks.
\end{prop}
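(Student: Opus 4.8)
The plan is to adapt the Świątkowski cube-complex model \cite{Swiat01} and its ordered refinement \cite{Luetgehetmann14} so as to accommodate sinks, realizing the target complex $K$ as a subspace of $\ConfSink{n}(G,W)$ and then constructing an explicit deformation retraction onto it. First I would fix an identification of each edge with $[0,1]$ and single out the finite set of \emph{canonical} configurations: those in which every particle either sits at a vertex or occupies one of finitely many equally spaced ``resting slots'' in the interior of an edge, with the particles on each edge recorded in their linear order, and with any number of particles allowed to coincide at a single sink. These canonical configurations are the $0$-cells of $K$. The $1$-cells are the two elementary moves described informally in the text: (a) a single particle sliding off a non-sink vertex $v$ of valence $\ge 2$ onto the near portion of an incident edge, and (b) a single particle traversing an edge $e\in E_W$ from one of its sink endpoints to the other. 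A $k$-cube is then a product of $k$ such moves performed simultaneously, where simultaneity means they involve pairwise distinct particles, pairwise distinct vertices of $V_{\ge 2}$, and pairwise distinct edges of $E_W$.

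The dimension count follows formally from these incidence constraints. A move of type (a) is indexed by the departure of the unique particle resting on its vertex $v$, so at most one type-(a) move is active per vertex of $V_{\ge 2}$; a move of type (b) occupies its edge $e$, so at most one type-(b) move is active per edge of $E_W$; and each active move consumes a distinct particle, so at most $n$ moves are active at once. Hence every cube has dimension at most $\min\{n,\ |V_{\ge 2}|+|E_W|\}$, and exhibiting a configuration in which the bound is simultaneously saturated shows that this value is actually attained.

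The substance of the proof is the retraction $r$, which I would build in stages, checking at each stage that the homotopy never leaves $\ConfSink{n}(G,W)$, i.e.\ never forces two particles to meet away from a sink. Stage one straightens the configuration on each edge interior: for fixed endpoint behaviour the ordered positions of the particles on an edge form a convex (open-simplex) region, so a fibrewise straight-line homotopy carries them to the canonical equally spaced slots while preserving their order, and monotonicity in each coordinate keeps them distinct. Stage two resolves the behaviour at vertices: near a non-sink vertex of valence $\ge 2$ at most one particle is allowed to rest on the vertex while the remaining nearby particles are pushed a definite distance out along their edges, in order, whereas near a sink the particles in a small neighbourhood are flowed into the sink, where they are permitted to merge. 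Composing the stages and rescaling gives a deformation retraction $r$ with $\im r = K$.

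The main obstacle is the compatibility and continuity of this vertex stage in the presence of sinks, which is the genuinely new ingredient compared with \cite{Swiat01} and \cite{Luetgehetmann14}. At an ordinary essential vertex the retraction must make a canonical, continuous choice of which single particle remains and how the others are ordered as they are expelled onto edges, and these local rules must glue across $G$ without ever merging two particles at a non-sink point; near a sink, by contrast, the rule must do the opposite and deliberately let particles pile up, including the extreme case of an edge $e\in E_W$ whose particles may all flow into either endpoint. Verifying that these two opposite regimes can be interpolated continuously across the collar between a sink and its adjacent edges — so that a particle approaching a sink is absorbed while a particle approaching a non-sink vertex is deflected — is where the care lies. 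Once this is done, the facts that $r$ is a deformation retraction and that $\im r = K$ is a cube complex of the stated dimension follow exactly as in the sink-free case.
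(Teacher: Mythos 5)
Your combinatorial description of the complex and the dimension count agree with the paper's, but the substance of the proposition is the continuous retraction, and there your construction has a genuine gap: stage one, the ``fibrewise straight-line homotopy onto equally spaced slots,'' is exactly the naive retraction that the paper's proof begins by ruling out, because it is discontinuous precisely at configurations where a particle sits on a vertex. Concretely, suppose an edge $e$ carries $k$ particles in its interior, one of them at distance $\epsilon$ from an endpoint $u\in V_{\ge 2}$. Your stage one sends that particle to the rigid slot at height $1/(k+1)$ (and the others to $j/(k+1)$); but for $\epsilon=0$ the particle lies on $u$, stage one sees only $k-1$ interior particles and sends them to the slots $j/k$, and stage two leaves the particle resting on $u$. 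So the composite map jumps as $\epsilon\to 0$. No subsequent ``vertex stage'' can repair a discontinuity already created on the edges, and for the same reason your $1$-cells (one particle sliding off a vertex while the others sit at fixed slots) do not even have endpoints among your $0$-cells: when the sliding particle reaches the vertex, the remaining particles must sit at the equal spacing for $k-1$ particles, not for $k$, so the other particles must co-move during the slide.

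The paper's proof solves exactly this by \emph{not} using rigid slots: for every half edge $h$ it introduces a parameter $t_h\in[0,1]$, defined via ratios of consecutive gap lengths (the quotient $\ell_h$ of the first segment by the second, capped at $1$, and $t_h=\min\{1,\ell_h/\min_{h'}\ell_{h'}\}$), and places the $j$-th particle at $(t_{h^\iota_e}+j-1)\cdot c_e$ with $c_e=(t_{h^\iota_e}+k_e-1+t_{h^\tau_e})^{-1}$. As a particle approaches a vertex, $t_h\to 0$ and the whole configuration on the edge deforms continuously into the one assigned when that particle is on the vertex with $t_h$ reset to $1$ and $k_e$ reduced by one; the requirement that at most one $t_h<1$ per vertex prevents collisions at non-sink vertices. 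This gap-ratio device is the actual content of the sink-free models in \cite{Swiat01} and \cite{Luetgehetmann14}, so your closing appeal that continuity ``follows exactly as in the sink-free case'' does not rescue the argument: your stages replace, rather than reuse, that device. Relatedly, you locate the only difficulty at the sink/non-sink interface, but the discontinuity above already occurs in a graph with no sinks at all; the genuinely new sink phenomena (absorbing all but the outermost particle of an edge into an adjacent sink, and the special rule for edges with two sink endpoints) are comparatively routine once the $t_h$ machinery is in place.
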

\begin{proof}
  The naive approach would be to retract particles in the interior of an edge to positions
  equidistant throughout the edge.
  However, this fails to be continuous as the number of particles in the interior changes, such as
  when a particle moves off a vertex.
  To fix this, we construct an additional parameter which controls the distance of the outermost
  particles on an edge from the vertices.

  Give $G$ the path metric such that every edge has length 1.
  For this proof, we define half edges in $G$:
  every edge consists of two distinct half edges $h^\iota_e$ and $h^\tau_e$.
  For each half edge $h$ we denote by $v(h)$ the vertex incident to $h$ and by $e(h)$ the edge
  corresponding to $h$ \emph{with the orientation determined by the half edge}.
  If $h$ is a half edge, then $\overline{h}$ is the other half of $e(h)$, and $e(h) = -
  e\left(\overline{h}\right)$.

  The general idea is now the following:
  the retraction $r$ only changes the position of particles \emph{inside} (closed) edges of the
  graph.
  We move as many particles of a given configuration $\mathbf{x}=(x_1,\ldots,x_n)$ as possible into
  the sinks, so that $r(\mathbf{x})$ has at most one particle in the interior of any edge incident
  to a sink.
  Furthermore, the particles of $r(\mathbf{x})$ on each single edge will be equidistant, except for
  the outermost particles, which may be closer to the vertices, see
  \autoref{fig:equidistant-particles-interval}.
  The main difficulty will be to define for each configuration $\mathbf{x}$ and each half edge $h$
  the parameter $t_h\in[0,1]$ determining the distance of the particles from the corresponding
  vertex.
  Decreasing $t_h$ to zero represents moving the particle on the edge that is nearest to the vertex
  $v(h)$ towards that vertex.
  To avoid multiple particles approaching the same vertex, we therefore require that for any pair of
  half edges $h\neq h'$ with $v(h)=v(h')$ only one of the two values $t_h$ and $t_{h'}$ can be
  strictly smaller than 1.

  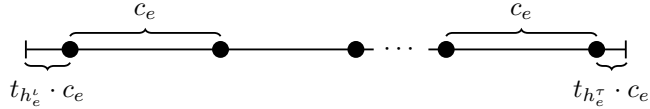
\begin{figure}[ht]
    \begin{center}
      \begin{tikzpicture}
        [circle dotted/.style={dash pattern=on .05mm off 1mm, line cap=round}, line width = .7pt]
        \node[align=center] (qdots) at (1,0) {$\cdots$};
        \path (-4,0) edge[|-] (qdots.west)
            (qdots.east) edge[-|] (4,0);
        \particle{(-3.4,0)};
        \particle{(-1.4,0)};
        \particle{(0.4,0)};
        \particle{(1.6,0)};
        \particle{(3.6,0)};
        \draw [decoration={ brace, mirror, raise=0.5em }, decorate ]
          (-4,0) -- node[below = .8em, pos=0.5] {$t_{h^\iota_e}\cdot c_e$}(-3.4,0);
        \draw [decoration={ brace, raise=0.5em }, decorate ]
          (-3.4,0) -- node[above = .8em, pos=0.5] {$c_e$}(-1.4,0);
        \draw [decoration={ brace, raise=0.5em }, decorate ]
          (1.6,0) -- node[above = .8em, pos=0.5] {$c_e$}(3.6,0);
        \draw [decoration={ brace, mirror, raise=0.5em }, decorate ]
          (3.6,0) -- node[below = .8em, pos=0.5] {$t_{h^\tau_e}\cdot c_e$}(4,0);
    \end{tikzpicture}
    \end{center}
    \caption{Equidistant particles on $e$.}
    \label{fig:equidistant-particles-interval}
  \end{figure}

  \vspace{1em}

  For fixed $(x_1, \ldots, x_n)\in\ConfSink{n}(G,W)$ we now define the image $r(\mathbf{x})$.
  The first step is to construct the parameter $t_h$.
  Let $v\in V(G)$ be a vertex and denote by $H_v$ the set of half edges $h$ with $v(h)=v$.
  If $H_v$ has only one element, then we set $t_h=1$ because we do not want to move particles
  towards a vertex of valence 1.
  Also, if $v$ is occupied by one of the particles $x_i$, then we set $t_h=1$ for all $h\in H_v$
  because we do not want to move particles towards an occupied vertex.

  Now assume that the valence of $v$ is at least two and it is not occupied by a particle.
  If for a half edge $h\in H_v$ the edge $e(h)$ contains no particles or $v(h)$ is a sink, set
  $t_h=1$.
  Otherwise, the particles on $e(h)$ cut the edge into segments, and we order these segments
  according to the orientation of $e(h)$ given by $h$.
  Let $\ell_h$ be the quotient of the length of the first segment by the length of the
  second segment, capped to the interval $[0,1]$, unless $v\left(\overline{h}\right)$ is a sink. If it is a sink, let $\ell_h$ be the length of the first segment. We treat these cases differently because particles on edges incident to sinks move from vertex to vertex instead of from edge to vertex.
  We now define
  \[
    t_h \defeq \min\left\{1, \frac{\ell_h}{\displaystyle\min_{h'\in H_v-\{h\}} \ell_{h'}}\right\}.
  \]
  Notice:
  \begin{itemize}
    \item if $\ell_h=\ell_{h'}$, then $t_h=t_{h'}=1$,
    \item if only one of the $\ell_h$ goes to zero, then also $t_h$ goes to zero, and
    \item at most one of the $t_h$ for $h\in H_v$ is strictly smaller than 1.
  \end{itemize}

  \vspace{1em}

  Given these parameters $t_h$ for all half edges $h$ we now construct the configuration $r((x_1,
  \ldots, x_n))$.
  The particles on the vertices are not moved by the retraction, so it remains to describe the
  change of position for the particles in the interior of an edge $e$.
  We will not change the order of the particles but only their position within the edge, and to make
  the description more concise we choose once and for all an isometric identification of each edge
  $e$ with $[0,1]$ such that $v(h^\iota_e)=0$.

  \textbf{If $\mathbf{e}$ is not incident to a sink vertex} the new position of the $j$-th vertex on
  $e$ will be given by $(t_{h^\iota_e} + j-1)\cdot c_e$, where $k_e\ge 1$ is the number of particles
  in the interior of $e$ and $c_e\defeq (t_{h^\iota_e} + k_e - 1 + t_{h^\tau_e})^{-1}$ will be the
  distance between the particles on that edge.
  This gives all particles on the edge the same distance and only modifies the distances from the
  vertices, see \autoref{fig:equidistant-particles-interval}.
  It remains to be shown that the positions of the particles on the edge vary continuously as $t_h$
  goes to 0. This is true when $t_{h^\iota_e}>0$, and notice that for $t_{h^\iota_e}=0$ the images
  of the particles will be the same as if we considered the first particle to be on $v(h^\iota_e)$
  and $t_{h^\iota_e}=1$:
  this would change $t_{h^\iota_e}$ from $0$ to $1$ and reduce $k_e$ by one, so that $c_e$ will be
  exactly the same.
  The analogous result also holds for $h^\tau_e$.
  This shows that the position of the particles on this closed edge after applying $r$ is continuous
  in the original configuration.

  \textbf{If $\mathbf{e}$ is incident to precisely one sink vertex} then we can assume that this
  sink vertex corresponds to $0\in[0,1]$.
  All particles on $e$ except the last one are then moved to $0$, the last particle is moved to
  $1-t_{h^\tau_e}\in[0,1]$.

  \textbf{If both vertices incident to $\mathbf{e}$ are sinks} we slide all particles away from
  $1/2\in[0,1]$ with speed given by their distance from $1/2$ until at most one particle is left in
  the interior $(0,1)$ of the interval.
  This gives a configuration having one particle on $e$ and the rest on the sinks.

  \vspace{1em}

  The map described above is continuous and a retraction, i.e.\ satisfies $r^2=r$.
  In the description we only changed the positions of particles on individual edges, so there is an
  obvious homotopy from the identity to $r$ by just adjusting the positions of the particles on each
  edge individually.

  \vspace{1em}

  The image of $r$ has the structure of a cube complex:
  the 0-cells are configurations where all particles in the interior of each interval cut the
  interval into pieces of equal length, and additionally no particle is in the interior of an edge
  with one or two sinks.
  A $k$-cube is given by choosing such a $0$-cell, $k$ distinct particles which are either outmost
  on their edge or on a sink and move them to an adjacent vertex.
  Such a choice of $k$ movements determines a $k$-cube if and only if we can realize the movements
  independently, namely if
  \begin{itemize}
    \item no two particles move along the same edge,
    \item no two particles move towards the same \emph{non-sink} vertex and
    \item no particle moves towards an occupied \emph{non-sink} vertex.
  \end{itemize}
  Each direction of the cube corresponds to the movement of one of the particles.
  By the description of the choices involved for finding $k$-cubes we immediately get the
  restriction on the dimension.
  For more details about the general construction of the cube complex (without sinks), see
  \cite{Luetgehetmann14}.
\end{proof}

It will be useful for subsequent proofs to have a notion for pushing in new particles from the
boundary of the graph.
\begin{definition}
  Let $G$ be a graph and $e$ be a leaf.
  For a finite set $S$ and an element $s\in S$, define the map
  \[
    \iota_{e,s}\colon\ConfSink{S-\{s\}}(G,W)\hookrightarrow\ConfSink{S}(G,W)
  \]
  by slightly pushing in the particles on $e$ and putting $s$ onto the univalent vertex of $e$.
\end{definition}

\begin{definition}
  Let $G$ be a graph.
  For finite sets $S'\subset S$, define the map
  \[
    \pi_{S'}\colon \ConfSink{S}(G,W)\to\ConfSink{S'}(G,W)
  \]
  by forgetting the particles $S-S'$.
  If $S'=\{s\}$ then we write instead $\pi_s\defeq \pi_{\{s\}}$.
\end{definition}

Notice that the composition $\pi_{S-\{s\}}\circ \iota_{e,s}$ is homotopic to the identity.

\begin{definition}
  Let $X=\Sigma_i \alpha_iX_i$ be a cellular chain in the combinatorial model of $\ConfSink{S}(G,
  W)$ .
  The particle $s$ is called a \emph{fixed particle of $X$} if there exists a cell $c$ of the graph
  $G$ such that $\pi_s(X_i)$ is contained in the interior of $c$ for all $X_i$.
  Here, the interior of a vertex is the vertex itself.
\end{definition}
Notice that fixed particles may still move inside their edge to preserve equidistance, but they
never leave their edge or vertex.

\subsection{The homology for small graphs}
For later use we calculate the homology of some of these configuration spaces with sinks.

\begin{prop}\label{prop:homology-sinks}
  \begin{align*}
    H_i\left( \ConfSink{n}(I, \varnothing) \right) &=
    \begin{cases}
      \Int\Sigma_n & i = 0\\
      0 & \text{else}
    \end{cases}\\
    H_i\left( \ConfSink{n}(S^1, \varnothing) \right) &=
    \begin{cases}
      \Int\left(\Sigma_n/\mathrm{shift}\right)\cong \Int^{(n-1)!} & i = 0,1\\
      0 & \text{else}
    \end{cases}\\
    H_i\left( \ConfSink{n}(I, \{0\}) \right) &=
    \begin{cases}
      \Int & i = 0\\
      0 & \text{else}
    \end{cases}\\
    H_i\left( \ConfSink{n}(I, \{0, 1\}) \right) &=
    \begin{cases}
      \Int & i = 0\\
      \Int^{(n-2)2^{n-1}+1} & i = 1\\
      0 & \text{else}
    \end{cases}\\
    H_i\left( \ConfSink{n}(S^1, \{0\}) \right) &=
    \begin{cases}
      \Int & i = 0\\
      \Int^{n} & i = 1\\
      0 & \text{else}
    \end{cases}
  \end{align*}
\end{prop}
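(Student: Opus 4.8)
In every one of these five cases the dimension bound $\min\{n,|V_{\ge2}|+|E_W|\}$ of \autoref{prop:combinatorial-model-sinks} is at most $1$: a direct check of the two quantities shows that $|V_{\ge2}|+|E_W|$ equals $0$ for $\ConfSink{n}(I,\varnothing)$ and for $\ConfSink{n}(I,\{0\})$, and equals $1$ for the remaining three (one non-sink valence-two vertex in $\ConfSink{n}(S^1,\varnothing)$, one sink–sink edge in $\ConfSink{n}(I,\{0,1\})$ and the sink-based loop in $\ConfSink{n}(S^1,\{0\})$). Consequently each space is homotopy equivalent to a $1$-dimensional CW complex, or to a discrete set, so $H_i=0$ for all $i\ge2$, the group $H_1$ is always free, and the entire proposition reduces to counting path components and first Betti numbers of explicit graphs.

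I would dispose of the three easy rows first. For $\ConfSink{n}(I,\varnothing)$ the retraction makes the $n$ particles equidistant on the single edge with the outermost ones pushed off the valence-one endpoints, so the $0$-cells are exactly the $n!$ linear orderings and there are no $1$-cells; this gives $H_0=\Int^{n!}=\Int\Sigma_n$. For $\ConfSink{n}(I,\{0\})$ the retraction sweeps every particle on the edge into the sink, since the outermost particle is sent to $1-t_{h^\tau_e}=0$ (the opposite vertex has valence one, forcing $t_{h^\tau_e}=1$); the image of $r$ is the single configuration with all particles at the sink, so the space is contractible and $H_0=\Int$. For $\ConfSink{n}(S^1,\varnothing)$ the cleanest route is the free rotation action of $S^1$: it is free because the particles are distinct, and the quotient is homeomorphic to $\Conf_{n-1}(\Real)$, which has $(n-1)!$ contractible components indexed by cyclic orderings. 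A principal $S^1$-bundle over a contractible base is trivial, so each component of $\ConfSink{n}(S^1,\varnothing)$ is homotopy equivalent to $S^1$, yielding $H_0\cong H_1\cong\Int\left(\Sigma_n/\mathrm{shift}\right)\cong\Int^{(n-1)!}$.

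The substantive case is $\ConfSink{n}(I,\{0,1\})$. Here a $0$-cell has no particle in the interior of the sink–sink edge, so it is precisely an assignment of each of the $n$ labelled particles to one of the two sinks, giving $2^n$ vertices. A $1$-cell slides a single chosen particle from one sink across the edge to the other while the remaining $n-1$ particles stay distributed among the two sinks, so it is determined by the moving particle together with a distribution of the rest, giving $n\cdot2^{n-1}$ edges. The complex is connected, since any two distributions are joined by moving particles between the sinks one at a time. For a connected graph $H_1$ is free of rank $1-\chi=1-(V-E)$, so
\[
  \rk H_1 = 1-\left(2^n-n\,2^{n-1}\right) = (n-2)2^{n-1}+1,
\]
with $H_0=\Int$ and $H_i=0$ for $i\ge2$, as claimed.

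Finally, for $\ConfSink{n}(S^1,\{0\})$ the loop has both endpoints at the single sink, so there is a unique $0$-cell (all $n$ particles at $0$), and a $1$-cell is the movement of one chosen particle once around the loop back to the sink; this produces $n$ loop-edges attached to the single vertex, i.e.\ a wedge of $n$ circles, whence $H_0=\Int$ and $H_1=\Int^{n}$. The step I expect to demand genuine care is the cell count and Euler-characteristic bookkeeping for $\ConfSink{n}(I,\{0,1\})$ — in particular verifying that each $1$-cell joins two \emph{distinct} $0$-cells so that no coincidences deflate the edge count — together with the correct handling of the loop in the $S^1$ case, where the two orientations of a single edge must not be double-counted.
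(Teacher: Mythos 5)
Your proposal is correct and follows essentially the same route as the paper: both reduce everything to the $\le 1$-dimensional cube-complex model of \autoref{prop:combinatorial-model-sinks} and compute Euler characteristics with exactly the same cell counts ($2^n$ vertices and $n2^{n-1}$ edges for $\ConfSink{n}(I,\{0,1\})$, one vertex and $n$ loops for $\ConfSink{n}(S^1,\{0\})$), and both contract $\ConfSink{n}(I,\{0\})$ by pulling all particles into the sink. The only differences are cosmetic: you fill in the case $\ConfSink{n}(S^1,\varnothing)$ with a principal-$S^1$-bundle argument where the paper just calls the answer clear, and your closing worry about edges joining distinct vertices is moot, since $\chi=V-E$ and $\rk H_1=1-\chi$ hold for a connected $1$-dimensional CW complex whether or not its edges are loops.
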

\begin{proof}
  The first two are clear.
  The interval with one sink has contractible configuration space: we can just gradually pull all
  particles into the sink.
  For the last two cases, note that the spaces are obviously connected by pulling all particles onto
  one of the sinks.
  Furthermore, by \autoref{prop:combinatorial-model-sinks} they are homotopic to 1-dimensional cube
  complexes.
  Computing the Euler characteristic gives the described ranks:

  \vspace{1em}

  \noindent
  \ul{$\chi(\ConfSink{n}(I,\{0,1\}))$:}
  There is a zero cube for every distribution of particles onto the two sinks, which means that
  there are $2^n$ of them.
  We have a 1-cell for each choice of one moving particle and every distribution of the remaining
  ones onto the two sinks, so there are $n2^{n-1}$ many 1-cells.
  Notice that this is the 1-skeleton of the $n$-dimensional cube.
  Thus, the Euler characteristic is $(2-n)2^{n-1}$, which determines the rank of the first homology
  group.

  \noindent
  \ul{$\chi(\ConfSink{n}(S^1,\{0\}))$:}
  There is precisely one zero cell, namely the one where all particles are on the sink.
  There is one 1-cell for each choice of one particle moving along the edge, giving $n$ 1-cells and
  therefore the Euler characteristic $1-n$.
  Notice that this is a bouquet of circles.
\end{proof}

\begin{rem}\label{rem:sink-cycles-as-H-cycles}
  Cycles in $H_1(\ConfSink{n}(I, \{0, 1\}))$ can be regarded as cycles in the ordinary configuration
  space of the H-graph $\Conf_n(\HH)$, see \autoref{fig:configurations-with-sinks-in-h-graph}.
  Replace both spaces by their combinatorial models and define a continuous map as follows:
  take a 0-cell of the configuration space with sinks and replace particles sitting on a sink vertex
  with them sitting on the corresponding lower leaf of the H-graph in their canonical ascending
  order.
  Moving a particle $x$ from one sink vertex to the other is then given by moving all particles
  blocking $x$'s path to the vertex to the upper leaf, moving $x$ onto the horizontal edge, moving
  the particles on the upper leaf back to the lower leaf and repeating the same game on the other
  side in reverse.
  This determines a continuous map between combinatorial models and thus induces a map on cellular
  1-cycles.
  \begin{figure}[htpb]
    \centering
    \begin{tikzpicture}
      \draw[-] (-1,0) -- (1,0);
      \filldraw (-1,0) circle (.1cm);
      \filldraw (1,0) circle (.1cm);

      \particleNr{(-1, -.5)}{3}
      \particleNr{(-1, -.8)}{5}
      \particleNr{(-1, -1.1)}{6}

      \particleNr{(1, -.5)}{2}
      \particleNr{(1, -.8)}{4}

      \particleNr{(-.1, 0)}{1}
      \draw[->] (.15, .1) -- (.4, .1);
      \draw[->] (.15, -.1) -- (.4, -.1);

      \node at (2.3,0) {$\leftrightsquigarrow$};

      \begin{scope}[shift={(4,0)}]
        \draw[-] (-1, 1) -- (0,0);
        \draw[-] (-1, -1) -- (0,0);
        \draw[-] (0, 0) -- (2,0);
        \draw[-] (2, 0) -- (3,1);
        \draw[-] (2, 0) -- (3,-1);

        \particleNr{(-.85, -.85)}{6};
        \particleNr{(-.52, -.52)}{5};
        \particleNr{(-.2, -.2)}{3};

        \particleNr{(2.7, -.7)}{4};
        \particleNr{(2.3, -.3)}{2};

        \particleNr{(.5, 0)}{1};
        \draw[->] (.75, .1) -- (1, .1);
        \draw[->] (.75, -.1) -- (1, -.1);
      \end{scope}

    \end{tikzpicture}
    \caption{Comparing $\ConfSink{n}(I, \{0,1\})$ and $\Conf_n(\HH)$}
    \label{fig:configurations-with-sinks-in-h-graph}
  \end{figure}

  This map is injective in homology:
  composing the map with the map collapsing the two pairs of leaves to sinks gives a map that is
  homotopic to the identity, showing that the homology of $\ConfSink{n}(I,\{0,1\})$ is a direct
  summand of the homology of $\Conf_n(\HH)$.
\end{rem}

\subsection{A Mayer-Vietoris spectral sequence for configuration spaces}
\label{sec:mv-spectral-sequence}
To compute the homology of the configuration space of a space $X$ we can decompose $X$ into smaller
spaces and patch together local results.
A structured way to do this is by using the Mayer-Vietoris spectral sequence associated with a
countable open cover.
\begin{definition}[Mayer-Vietoris spectral sequence]
  Let $J$ be a countable ordered index set and $\{V_j\}_{j\in J}$ an open cover of $X$, then we
  define the following countable open cover $\mathcal{U}(\{V_j\})$ of $\Conf_n(X)$:
  for each $\phi\colon \mathbf{n}\to J$ we define $U_\phi$ to be the set of all those configurations
  where each particle $i$ is in $V_{\phi(i)}$, i.e.
  \[
    U_\phi \defeq \bigcap_{i\in\mathbf{n}} \pi_i^{-1}\left( V_{\phi(i)} \right).
  \]
  These sets are open and cover the whole space, so they define a spectral sequence
  \[
    E^1_{p,q} = \bigoplus_{\{\phi_0,\ldots,\phi_p\}} H_q\left( U_{\phi_0}\cap\cdots\cap U_{\phi_p}
    \right) \Rightarrow H_*\left( \Conf_n(X) \right)
  \]
  converging to the homology of the whole space.
  For a proof of the convergence of this spectral sequence, see \cite[Proposition 2.1.9, p.
  13]{Chettih16}.
\end{definition}
Notice that
\[
  U_{\phi_0}\cap\cdots\cap U_{\phi_p} = \bigcap_{i\in\mathbf{n}} \bigcap_{0\le j\le p}
  \pi_i^{-1}\left( V_{\phi_j(i)} \right).
\]
For brevity, we will also write
\[
  U_{\phi_0\cdots\phi_p} \defeq U_{\phi_0} \cap \cdots \cap U_{\phi_p}.
\]
The boundary map $d_1$ is given by the alternating sum of the face maps induced by
\[
  U_{\phi_0}\cap\cdots\cap U_{\phi_p} \hookrightarrow U_{\phi_0}\cap\cdots\cap
  \widehat{U_{\phi_i}}\cap\cdots\cap U_{\phi_p}
\]
forgetting the $i$-th open set from the intersection.
Of course, this construction generalizes to configuration spaces with sinks.

\section{Configurations of particles in trees with loops}\label{sec:trees}
We will more generally prove \autoref{thm:trees} for all graphs as in the statement of the theorems
with any (possibly empty) subset of the vertices of valence one turned into sinks.
The proof will proceed by induction over the number of essential vertices (i.e.\ vertices of valence
at least three).
We first prove the base case:
\begin{prop}\label{prop:tree-base-case}
  Let $G$ be a finite connected graph with precisely one essential vertex and $W$ a subset of the
  vertices of valence 1.
  Then $H_1(\ConfSink{n}(G,W))$ is free and generated by basic classes.
\end{prop}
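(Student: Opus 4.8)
The plan is to obtain freeness almost for free from the combinatorial model and to concentrate the real effort on the statement about basic classes.

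First I would reduce the dimension of the model. Since $W$ consists only of vertices of valence one, smoothing away every non-sink vertex of valence two is a homeomorphism of the pair $(G,W)$ and hence does not change $\ConfSink{n}(G,W)$ up to homeomorphism; after this reduction the loops become self-edges at the unique essential vertex $v$, and the only vertex of valence at least two is $v$ itself. Thus $|V_{\ge 2}|=1$, while no edge joins two sinks (every edge is incident to $v$, which is not a sink), so $|E_W|=0$. By \autoref{prop:combinatorial-model-sinks} the space $\ConfSink{n}(G,W)$ therefore deformation retracts onto a cube complex of dimension $\min\{n,1\}\le 1$. A one-dimensional complex has free first homology and vanishing homology in degrees $\ge 2$, which proves the freeness assertion. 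Note, however, that a one-dimensional complex has no $2$-cells, so $H_1$ is literally the group of cellular $1$-cycles: there are no relations available, and the remaining task is to show that the basic classes \emph{span} this entire cycle group.

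For the generation statement I would first write down the candidate generators explicitly in this model. A $0$-cell is a distribution of the labelled particles on the arms, loops and leaves of $G$, and a $1$-cell is a single elementary move at $v$ (a particle hopping off an arm or loop-end onto $v$ and then onto another branch). The basic $1$-cycles are then of two kinds, each multiplied by a $0$-class recording the positions of the spectator particles: \emph{star classes}, in which two or three particles are cyclically reshuffled among three branches at $v$, and \emph{circle classes}, in which a block of particles is transported once around a loop. These are exactly the products of basic classes permitted in degree $1$ for $\Star_k$, $S^1$ and $I$; the graph $\HH$ cannot occur since $G$ has a single essential vertex. To show these span, I would choose a maximal tree in the graph model adapted to a canonical ordering of configurations (a Farley--Sabalka-type straightening that pushes particles away from $v$ into a fixed cyclic order on the branches), so that the fundamental cycles of the finitely many non-tree edges form a basis of $H_1$. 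One then identifies each such fundamental cycle with a basic class: a non-tree edge is a move at $v$ that the straightening cannot cancel, and running it back through the tree produces either a cyclic reshuffle among three branches (a star class) or a full traversal of a loop (a circle class), in both cases leaving the other particles fixed; \autoref{prop:homology-sinks} anchors this identification in the pure cases of a single circle and of the sink interval. An equivalent route is to induct on $n$: for a chosen leaf $e$ the homotopy $\pi_{S-\{s\}}\circ\iota_{e,s}\simeq\mathrm{id}$ exhibits the cycles with $s$ parked on $e$ as products of an $(n-1)$-particle basic class with a $0$-class, and one treats the complementary cycles, in which $s$ genuinely moves, directly.

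The main obstacle is precisely this last identification: verifying that \emph{every} fundamental cycle of the adapted tree---across all arms, all loops, all admissible spectator distributions, and all choices of which leaves are sinks---reduces to a sum of star and circle classes, and that no exotic $1$-cycle survives. This is the combinatorial heart of the base case, and it is where the careful choice of the straightening order and a uniform bookkeeping of the spectator $0$-classes are essential.
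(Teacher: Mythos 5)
Your freeness argument is fine and agrees with the paper's (one essential non-sink vertex, no sink--sink edges, so \autoref{prop:combinatorial-model-sinks} gives a $1$-dimensional complex). The genuine gap is in the second half, specifically your assertion that ``the graph $\HH$ cannot occur since $G$ has a single essential vertex,'' which makes your proposed generating set (star classes and circle classes only) too small whenever $W\neq\varnothing$. The proposition allows sinks, and the paper explicitly stipulates that an $\HH$-class in a graph with sinks may have leaves of $\HH$ collapsed to a sink: a sink at the end of an edge acts as a second reordering device, exactly like a second essential vertex. A concrete counterexample to your claimed spanning set: take $G=\Star_3$ with central vertex $v$ and one leaf vertex turned into a sink $w_0$, and $n=2$. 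In the combinatorial model one counts $17$ zero-cells and $18$ one-cells (moves of one particle between $v$ and one of the two free edges, or between $w_0$ and $v$), so $\chi=-1$; the space is connected and $1$-dimensional, hence $H_1\bigl(\ConfSink{2}(\Star_3,\{w_0\})\bigr)\cong\Int^2$. But star classes span at most the rank-one image of $H_1(\Conf_2(\Star_3))\cong\Int$, and $G$ contains no embedded circle, so star and circle classes cannot generate. The missing generator is precisely the sink-$\HH$-class in which the two particles exchange positions using $v$ on one side and the collision point $w_0$ on the other. Consequently, the final identification step of your straightening argument --- every fundamental cycle is a star or circle class --- is false with sinks, and no choice of spanning tree can repair it.

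For comparison, the paper does not attempt a spanning-tree identification at all. It proves generation by a Mayer--Vietoris induction that removes one sink at a time: covering $\ConfSink{S,T}(\Gamma_{w_0},W)$ by $V_1=\pi_s^{-1}(G-\{w_0\})$ and $V_2=\pi_s^{-1}(\{x\,|\,d_G(x,w_0)<1\})$, the new first-homology classes come from the kernel of $H_0(V_1\cap V_2)\to H_0(V_1)\oplus H_0(V_2)$, and lifting these differences of components produces exactly the sink-$\HH$-classes involving $w_0$ and the central vertex; the sink-free base case is then delegated to \autoref{prop:1cycles-general-graph-induction}. Note also that even in the case $W=\varnothing$, where your list of generators is correct, your proposal leaves its ``combinatorial heart'' (that every fundamental cycle of the adapted tree reduces to star and circle classes) as an acknowledged but unproven claim --- this is the entire difficulty, and the paper's route around it is the Mayer--Vietoris argument of \autoref{sec:general-graph}, not a normal-form/straightening argument.
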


Notice that if we talk of $\HH$-classes in a graph \emph{with sinks} $(G,W)$ then we allow some of
the leaves of $\HH$ to be collapsed to a sink under the map $\HH\to G$.
In the proof, we will need the following definition:

\begin{definition}\label{def:configurations-of-tuples}
  For finite sets $T\subset S$, a finite graph $G$, a subset $K\subset G$, and sinks $W\subset V(G)$
  write $\Gamma=(G,K)$ and define
  \[
    \ConfSink{S,T}( \Gamma, W) = \{ f\colon S\to G\,|\, f(T)\subset K \}\subset
    \ConfSink{S}(G,W).
  \]
\end{definition}

As a consequence of the definition, we get
\[ \ConfSink{S, \emptyset}(\Gamma, W) = \ConfSink{S}(G, W) \]
and
\[ \ConfSink{S, S}(\Gamma, W) = \ConfSink{S}(K, W\cap K). \]

\begin{proof}[{Proof of \autoref{prop:tree-base-case}}]
  By \autoref{prop:combinatorial-model-sinks}, $\ConfSink{n}(G,W)$ is homotopy equivalent to a
  graph, so the first homology is free.
  To see that it is generated by basic classes, we inductively use a Mayer-Vietoris long exact
  sequence.

  For a sink $w\in W$ let $\Gamma_w = (G,G-\{w\})$.
  Notice that
  \[ \ConfSink{S,\emptyset}(\Gamma_w,W) = \ConfSink{S}(G,W). \]
  and
  \begin{align*}
    \ConfSink{S, S}(\Gamma_w,W) &= \ConfSink{S}(G-\{w\}, W-\{w\}) \\
    &\simeq \ConfSink{S}(G, W-\{w\}),
  \end{align*}
  where the last homotopy equivalence follows because $w$ has valence 1.
  For two sinks $w_0\neq w_1$ we therefore have
  \[ \ConfSink{S, S}(\Gamma_{w_0},W) \simeq \ConfSink{S,\emptyset}(\Gamma_{w_1}, W-\{w_0\}).\]
  Moving elements from $S-T$ to $T$ and using the above identifications, we will show by induction
  on $|S-T|$ and the number of sinks $|W|$ that the first homology of all spaces
  $\ConfSink{S,T}(\Gamma,W)$ is generated by basic classes.

  \vspace{1em}

  In the base case, we have $T=\emptyset$ and $W=\emptyset$, so the space we are investigating is
  the ordinary configuration space $\Conf_S(G)$, which is generated by basic classes by
  \autoref{prop:1cycles-general-graph-induction} (this is not a circular argument, the proposition
  is only stated and proven later since it is the main step to compute the first homology of
  configuration spaces of arbitrary finite graphs).
  For the induction step, choose an arbitrary $s\in S-T$ and take the open covering $\{V_1, V_2\}$
  of $\ConfSink{S,T}(\Gamma_{w_0}, W)$ given by the subsets
  \[
    V_1 \defeq \pi_s^{-1}(G-\{w_0\}) \quad \text{ and } \quad
    V_2 \defeq \pi_s^{-1}(\{ x\in G\,|\, d_G(x,w_0) < 1\}).
  \]

  \begin{figure}[htpb]
    \centering
    \begin{tikzpicture}[line width = .7pt]
      \draw[-] (-1,0) -- (0.8,0);

      \draw[-] (-1,0) -- (-2.5, 1);

      \draw[-] (-1,0) -- (-2.8, 0);

      \draw[-] (-1,0) -- (-2.5,-1);

      \draw[-] plot [smooth] coordinates {(-1, 0) (-1.3, 1) (-1, 1.3) (-0.7, 1) (-1, 0)};

      \filldraw (0.8,0) circle (.1cm);
      \node at (0.8, -.4) {$w_0$};
      \filldraw (-2.5,-1) circle (.1cm);

      \draw[gray] plot [smooth cycle] coordinates {(-4.5, 0) (-2, -2) (-1, -2) (0.6, 0) (-1, 2) (-2,
      2)};
      \node at (-3.7,0) {\color{gray}\small $U_1$};

      \draw[gray] plot [smooth cycle] coordinates {(1.5, 0) (0.5, -1) (-0.7, 0) (0.5, 1)};
      \node at (1.9,0) {\color{gray}\small $U_2$};
    \end{tikzpicture}
    \caption{The open cover $\{V_1,V_2\}$ of the configuration space is defined by restricting
  particle $s$ to one of these two open sets $U_1$ and $U_2$, respectively.}
    \label{fig:open-cover-one-essential-vertex-sink}
  \end{figure}
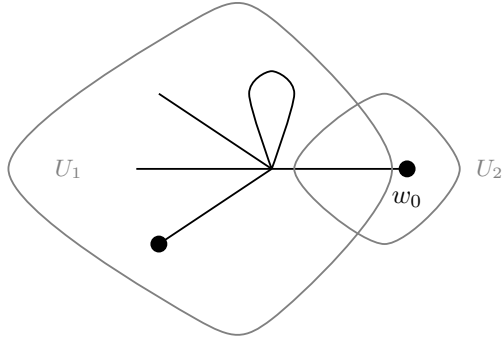

  The interesting part of the Mayer-Vietoris long exact sequence is the following:
  \begin{align*}
    H_1(V_1)\oplus H_1(V_2) &\to H_1(\ConfSink{S,T}(\Gamma_{w_0},W)) \to H_0(V_1\cap V_2) \\
    &\to H_0(V_1)\oplus H_0(V_2).
  \end{align*}
  We have $V_1\simeq \ConfSink{S, T\sqcup\{s\}}(\Gamma_{w_0},W)$, and $V_2$ is homotopy equivalent
  to a disjoint union of the space $\ConfSink{S-\{s\}, T}(\Gamma_{w_0},W)$ and several copies
  of $\ConfSink{S'}(G, W-\{w_0\})$ for different finite sets $S'\subset S$.
  Those latter components of $V_2$ arise if particles of $T$ sit between $s$ and $w_0$, preventing
  $s$ to move to the sink.
  The set $S'$ is then given by the set of all particles on the other side of $s$.
  The first component is identified by moving $s$ to the sink and forgetting it.

  The first homology of both of these spaces is by induction generated by basic classes.
  Therefore, it remains to show that the classes coming from the kernel $H_0(V_1\cap V_2)\to
  H_0(V_1)\oplus H_0(V_2)$ are generated by basic classes.

  In $V_1\cap V_2$ the particle $s$ is trapped on the edge $e$ between $w_0$ and the central vertex.
  We can represent each connected component by a configuration where all particles sit on $e$.
  The remaining particles are then distributed to both sides of $s$.
  Restricted to the connected components where there is a particle of $T$ on the $w_0$-side of $s$,
  the map
  \[
    V_1\cap V_2\hookrightarrow V_2
  \]
  is a homeomorphism onto the corresponding connected components of $V_2$ because those particles
  in $T$ prevent $s$ from moving to the sink $w_0$.
  The image of that restricted inclusion is disjoint from the image of the remaining components, so
  to find elements in the kernel of
  \[
    H_0(V_1\cap V_2)\to H_0(V_1)\oplus H_0(V_2)
  \]
  we can restrict ourselves to the union $X$ of components where no element of $T$ is on the
  $w_0$-side of $s$.

  The inclusions $X\to V_1$ and $X\to V_2$ map all these connected components to the same component
  of $V_1$ and $V_2$, respectively, because we can use either the sink or the essential vertex to
  reorder the particles.
  Therefore, the kernel of the map to $H_0(V_1)\oplus H_0(V_2)$ is generated by differences of
  distinct ways of putting particles in $S-T$ to the two sides of $s$, and the lifting
  process turns these differences into $\HH$-classes involving $w_0$ and the central vertex, proving
  the claim.
\end{proof}

\subsection{A basis for configurations in graphs with one essential vertex}
\label{sec:basis-conf-stars}
The key to proving the induction step is choosing for each leaf $e$ a particular system of
bases for all first homology groups $H_1(\ConfSink{\bullet}(G,W))$ with the following property:
if a representative of a basis element has fixed particles on the leaf $e$ then changing the order
of these particles should give another basis element, and all these basis elements should be
distinct.
Furthermore, adding and forgetting fixed particles of representatives of basis elements should again
give elements in the chosen system of bases.
For the description of such a system of bases, fix the graph $G$, the set of sinks $W$ and the leaf
$e$.

For all finite sets $S$ we will choose a system of spanning trees $T_S$ in the combinatorial model of
$\ConfSink{S}(G,W)$.
As constructed in \autoref{prop:combinatorial-model-sinks}, this model is a graph.
For each edge $\xi$ in the combinatorial model, the system $T_\bullet$ will have the following
properties:
\begin{itemize}
  \item The edge $\xi$ determines a set $F_\xi$ of fixed particles on the leaf $e$.
    The symmetric group $\Sigma_{F_\xi}\le \Sigma_n$ acts on the combinatorial model by
    precomposition, and we want that the orbit $\Sigma_{F_\xi}\cdot \xi$ is completely contained in
    either $T_S$ or $G-T_S$.
  \item Given $s\not\in S$ we have a map $\ConfSink{S}(G,W)\to\ConfSink{S\sqcup\{s\}}(G,W)$ by
    adding the particle $s$ to the end of the leaf $e$. Then $\xi$ should be in $T_S$ if and only if
    the image of $\xi$ under that map is contained in $T_{S\sqcup \{s\}}$.
\end{itemize}

We now inductively choose the system of spanning trees $T_S$.
For $S=\emptyset$, we define $T_\emptyset = \emptyset$.
Given a non-empty set $S$, complete the forest
\[
  \bigsqcup_{s\in S} \iota_{e,s}\left( T_{S-\{s\}} \right)
\]
to a spanning tree $T_S$ in an arbitrary way.
If $S'\subset S$ then $T_{S'}$ appears as subtrees of $T_S$ by adding the particles $S-S'$ to the
leaf $e$ in all different orders.
While completing this forest we only add edges that have no fixed particles on $e$, otherwise, one
of the trees $T_{S-\{s\}}$ was not maximal in $\ConfSink{S-\{s\}}(G,W)$.
This yields a spanning tree $T_S$ of $\ConfSink{S}(G,W)$, inductively describing spanning trees for
all finite sets $S$ with the properties listed above.

\vspace{1em}

This defines a system of bases $\mathcal{B}_\bullet$ of $H_1(\ConfSink{\bullet}(G,W))$ with the
following properties:
\begin{itemize}
  \item for $\sigma\in\mathcal{B}_S$ the class $\sigma^\eta$ given by adding a set of particles $T$
    in some order $\eta$ to the end of the leaf $e$ is an element of $\mathcal{B}_{S\sqcup T}$,
  \item for $\sigma\in\mathcal{B}_S$ the classes $\sigma^\eta$ and $\sigma^{\eta'}$ for two
    orderings $\eta\neq\eta'$ of $T$ are distinct,
  \item every $\sigma\in\mathcal{B}_S$ has precisely one \emph{minimal representative}
    $\sigma_{\min}\in\mathcal{B}_{S'}$ for $S'\subset S$ such that $(\sigma_{\min})^\eta=\sigma$ for
    some ordering $\eta$ of $S-S'$ (meaning that the set $S'$ is minimal with respect to this
    property) and
  \item we always have $\left( \sigma^\eta \right)_{\min} = \sigma_{\min}$.
\end{itemize}
Given $\sigma\in\mathcal{B}_S$ and the corresponding minimal cycle $C$, define $S'$ to be the
set of fixed particles of $C$ which are on $e$.
Then $\pi_{S-S'}(\sigma)$ defines the minimal representative $\sigma_{\min}\in\mathcal{B}_{S-S'}$.
With this definition it is straightforward to check the four properties described above.

\subsection{The spectral sequence for the induction step}
Let $(G,W)$ be a tree with loops with any subset of the vertices \emph{of valence one} turned into
sinks, and $v$ an essential vertex which is connected to precisely one other essential vertex $w$
via an edge $e$.
Define the following two open subspaces of $G$:
\[
  L \defeq \left\{ x\in G \,|\, \text{$d_G(x,v) < 1$} \right\}
\]
and
\[
  K \defeq \left\{ x\in G \,|\, d_G(x, G-L) < 1 \right\},
\]
where $d_G$ is the path metric giving every internal edge of $G$ length 1 and every leaf length
$1/2$.
In other words, $K$ is the connected component of $G-\{v\}$ containing $w$, see
\autoref{fig:subgraphs-adding-a-star-graph}.

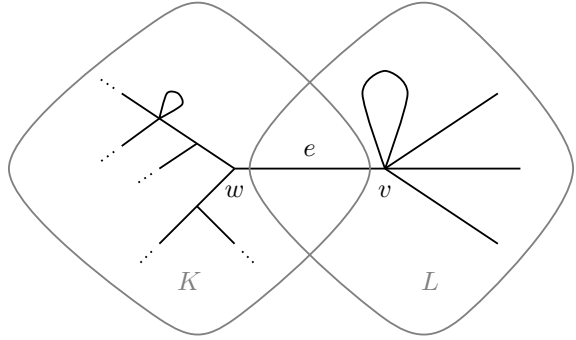
\begin{figure}[htpb]
  \centering
  \begin{tikzpicture}[line width = .7pt]
    \draw[-] (-1,0) -- (1,0);

    \draw[-] (-1,0) -- (-2.5, 1);
    \draw[dotted] (-2.6,.666667*1.6) -- (-2.8, .666667*1.8);

    \draw[-] (-2,.66667) -- (-2.5, .66667 - .5*0.73334);
    \draw[dotted] (-2.6, .66667 - .6*0.73334) -- (-2.8,.66667 - .8*0.73334);

    \draw[-] plot [smooth] coordinates {(-2, .66667) (-1.9, 1) (-1.75, 1) (-1.7, .85) (-2, .66667)};

    \draw[-] (-1.5,.33333) -- (-2, .33333 - .5*.666667);
    \draw[dotted] (-2.1,.33333 - .6*.6666667) -- (-2.3,.33333 - .8*.6666667);

    \draw[-] (-1,0) -- (-2,-1);
    \draw[dotted] (-2.1,-1.1) -- (-2.3,-1.3);

    \draw[-] (-1.5,-.5) -- (-1,-1);
    \draw[dotted] (-.9,-1.1) -- (-0.7,-1.3);

    \node at (-1.6, -1.5) {\color{gray}$K$};

    \node at (0, 0.25) {$e$};

    \draw[-] (1,0) -- (2.5, 1);

    \draw[-] (1,0) -- (2.8, 0);

    \draw[-] (1,0) -- (2.5,-1);

    \draw[-] plot [smooth] coordinates {(1, 0) (1.3, 1) (1, 1.3) (0.7, 1) (1, 0)};

    \node at (-1, -.3) {$w$};
    \node at (1, -.3) {$v$};

    \node at (1.6, -1.5) {\color{gray}$L$};

    \draw[gray] plot [smooth cycle] coordinates {(-4, 0) (-2, -2) (-1, -2) (0.8, 0) (-1, 2) (-2,
    2)};

    \draw[gray] plot [smooth cycle] coordinates {(3.5, 0) (2, -2) (1, -2) (-0.8, 0) (1, 2) (2, 2)};
  \end{tikzpicture}
  \caption{The two subgraphs $K,L$ of $G$.}
  \label{fig:subgraphs-adding-a-star-graph}
\end{figure}

The intersection $L\cap K$ is the interior of the edge $e$.
The graph $K$ has strictly fewer essential vertices than $G$, so by induction we can assume that
its configuration spaces (with sinks) of any number of particles are torsion-free and generated by
products of basic classes.

As described in \autoref{sec:mv-spectral-sequence}, construct the open cover $\mathcal{U}(\{ K,
L \})$ of $\ConfSink{n}(G,W)$ and look at the corresponding Mayer-Vietoris spectral sequence
$E^*_{\bullet,\bullet}$.
The open cover has one open set for each map $\phi\colon\mathbf{n}\to \{K, L\}$, restricting
particle $i$ to the open set $\phi(i)$.

We have
\begin{align*}
  U_{\phi_0\cdots\phi_p} &= U_{\phi_0}\cap\cdots\cap U_{\phi_p}\\
    &= \bigcap_{i\in\mathbf{n}} \bigcap_{0\le j\le p} \pi_i^{-1}\left( V_{\phi_j(i)} \right)\\
    &\simeq \coprod_{j\in J} \ConfSink{S^j_L}(L, W_L)\times \ConfSink{T^j_K}(K,W_K),
\end{align*}
where $J$ is a finite index set, $S^j_L\sqcup T^j_K\subset\mathbf{n}$ and $W_L$ and $W_K$ are the
sinks of $L$ and $K$, respectively.
To see this, notice that each connected component of such an intersection has three types of
particles:
\begin{itemize}
  \item particles which can move everywhere in $L$,
  \item particles which can move everywhere in $K$,
  \item particles which are restricted to the intersection $L\cap K$.
\end{itemize}
A particle $x$ of the last type either has $\{\phi_0(x), \ldots, \phi_p(x)\}=\{K,L\}$ or is trapped
by another particle.
Since each connected component of the configuration space of particles in the interval $L\cap K$
is contractible, we get an identification as described above simply by forgetting the particles
restricted to the intersection.
The order of the particles on this intersection will be important for the face maps given by going
from $(p+1)$-fold intersections to $p$-fold intersections by forgetting one of the open sets.

\vspace{1em}

The $E^1$-page consists at position $(p,q)$ of the $q$-th homology of all $(p+1)$-fold intersections
of the open sets $U_\phi$.
By the identification above and the Künneth theorem, each $E^1_{p,q}$ is given as
\[
  E^1_{p,q}\cong \bigoplus_{j\in J'}\bigoplus_{q_L+q_K=q} H_{q_L}(\ConfSink{S^j_L}(L, W_L)) \otimes
  H_{q_K}(\ConfSink{S^j_K}(K, W_K)),
\]
where $J'$ is some finite indexing set.
Here we used that we know that the configuration spaces of $L$ have free homology.
Recall that attached to each of those summands there is an ordering of the particles
$\mathbf{n}-S^j_L-S^j_K$, which are sitting on the interior of $e$.
The face maps forgetting one of the open sets from a $(p+1)$-fold intersection yielding a $p$-fold
intersection only affect the particles restricted to the intersection $L\cap K$:
for some (but possibly none) of them the restriction is removed, allowing them to move in all of
either $L$ or $K$.
Under the identification above, these particles are added to the sets $S^j_L$ or $S^j_K$ and put
to the edge $e$ of $L$ or $K$, respectively, in the order determined by their order on $L\cap
K$.

\vspace{1em}

Since the configuration space of $L$ is 1-dimensional by \autoref{prop:combinatorial-model-sinks}
these summands of $E^1_{p,q}$ are only non-trivial for $q_L\in\{0,1\}$.
The horizontal boundary map $d_1$ preserves $q_L$, so the $E^1$-page splits into two parts
$(^0\!E^1, ^0\!d_1)$ and $(^1\!E^1, ^1\!d_1)$ consisting of all direct summands with $q_L = 0$
and $q_L=1$, respectively.
The key point is now that $^1\!E^2$ is concentrated in the zeroth column, we understand
$^0\!E^\infty$, and the two spectral sequences don't interact.

\subsection{The homology of $^1\!E^1$}
As described in \autoref{sec:basis-conf-stars}, choose a system of bases $\mathcal{B}_\bullet$ for
$H_1(\ConfSink{\bullet}(L, W_L))$ for the edge of $L$ corresponding to $e$.
This determines a direct sum decomposition of the direct summands of every module $^1\!E^1_{p,q}$ as
follows:
\begin{align*}
  H_1(\ConfSink{S^j_L}(L, W_L)) &\otimes H_{q-1}(\ConfSink{S^j_K}(K, W_K))
  \\&\cong \bigoplus_{\sigma\in\mathcal{B}_{S^j_L}} \Int_\sigma\otimes
  H_{q-1}(\ConfSink{S^j_K}(K, W_K)).
\end{align*}
Here, $\Int_\sigma$ is the free abelian group on the single generator $\sigma$.

By the description of the face maps above and the properties of the system of bases, the boundary
map $^1\!d_1$ does not change the \emph{minimal} representative of the first tensor factor.
Grouping these summands by their corresponding minimal representative $\sigma_0$ yields a
decomposition of each row $^1\!E^1_{\bullet,q}$ into summands denoted by $(E^1[\sigma_0],
d^{\sigma_0}_1)$, which is a decomposition \emph{as chain complexes}.
We now compute the homology of one of these chain complexes $E_{\bullet,q}^1[\sigma_0]$ for fixed
$\sigma_0$ and $q\ge0$.

\vspace{1em}

Let a minimal $\sigma_0\in\mathcal{B}_S$ for some $S\subset\LST{n}$ be given (i.e.\
$(\sigma_0)_{\min}=\sigma_0$), then every $\sigma\in\mathcal{B}_{S'}$ appearing in one of the second
tensor factors of the modules in the chain complex $E^1_{\bullet,q}[\sigma_0]$ is given by adding
fixed particles $S'-S$ to $\sigma_0$, putting them in some ordering to the end of $e$ (away from
$v$).
Since there are no relations between the different orderings of the particles $S'-S$, we can forget
the particles $S$ and replace $L$ by an interval:

Let $^KE^*_{\bullet,\bullet}$ be the Mayer-Vietoris spectral sequence for $\ConfSink{\LST{n}-S}(K,
W_K)$ corresponding to the cover $\{K,L\}$ pulled back by the inclusion $K\hookrightarrow G$.
The chain complex $E^1_{\bullet,q}[\sigma_0]$ is isomorphic to the chain complex
$^KE^1_{\bullet,q}$ by forgetting the particles $S$ involved in $\sigma_0$ and looking at
cycles of the remaining particles.

The open cover of $K$ is very special: one of the open sets is the whole space itself.
We will now show that because of that, the $E^2$-page is concentrated in the zeroth column.
The open cover of $\ConfSink{\LST{n}-S}(K,W_K)$ is indexed by maps $\psi\colon\LST{n}-S\to \{K,
L\cap K \}$.
For the map $\psi_\mathrm{all}$ sending everything to $K$, we have
$U_{\psi_\mathrm{all}}=\ConfSink{\LST{n}-S}(K,W_K)$.
Hence, for each tuple $(\psi_0,\ldots,\psi_p)$ with $\psi_i\neq\psi_\mathrm{all}$ for all $i$ the
inclusion
\[
  U_{\psi_0}\cap\cdots\cap U_{\psi_p}\cap U_{\psi_\mathrm{all}} \to U_{\psi_0}\cap\cdots\cap
  U_{\psi_p}
\]
and therefore the face maps
\[
  H_q(U_{\psi_0}\cap\cdots\cap U_{\psi_p}\cap U_{\psi_\mathrm{all}}) \to
  H_q(U_{\psi_0}\cap\cdots\cap U_{\psi_p})
\]
are the identity.
Notice that precisely one of the $p+2$ face maps with that source lands in an intersection without
$U_{\psi_\mathrm{all}}$.
By adding $^K\!d_1$ boundaries we can thus assume that every homology class of the chain complex
$(^KE^1_{p,q}, {^K}d_1)$ has a representative which is trivial in all direct summands
$H_q(U_{\psi_0\cdots\psi_p})$ where none of the $\psi_i$ is $\psi_\mathrm{all}$.

The composition of maps
\[
  \bigoplus_{\substack{\psi_0<\cdots<\psi_p\\ \exists i: \psi_i=\psi_\mathrm{all}}}
  \!\!\!\!\!H_q(U_{\psi_0\cdots\psi_p}) \xrightarrow{^K\!d_1}
  \bigoplus_{\psi_0<\cdots<\psi_{p-1}} \!\!\!\!\!\!\!H_q(U_{\psi_0\cdots\psi_{p-1}})
  \twoheadrightarrow
  \bigoplus_{\substack{\psi_0<\cdots<\psi_{p-1}\\ \not\exists i: \psi_i=\psi_\mathrm{all}}}
  \!\!\!\!\!\!\!H_q(U_{\psi_0\cdots\psi_{p-1}}),
\]
where the second map collapses all direct summands with one of the $\psi_i$ equal to
$\psi_\mathrm{all}$, is injective by the observation above (actually the images of the direct
summands intersect trivially, and restricted to one such summand the map onto its image is given by
either the identity or multiplication by $-1$).
In particular, the map $^K\!d_1$ restricted to the intersections including $U_{\psi_\mathrm{all}}$ is
injective (unless we are in the zeroth degree), and the homology is trivial.

Therefore, the homology of $E^1_{\bullet,q}[\sigma_0]$ is zero in degrees $i\neq 0$ and given by
\[
  \Int_{\sigma_0}\otimes H_{q-1}\left(\ConfSink{\LST{n}-S}(K,W_K) \right)
\]
for $i=0$, which by induction is free and generated by products of basic classes.

In conclusion, the homology of $^1\!E^1$ is free, concentrated in the zeroth column and generated by
products of basic classes.
Denote this bigraded module by $E^\infty[K]$.

\subsection{The homology of $^0\!E^1$ and the $E^\infty$-page}

The other part, $^0\!E^1$, is actually the first page of the Mayer-Vietoris spectral sequence
$E_{\bullet,\bullet}^*[G/L]$ of $G$ with $L-e$ collapsed to a sink with respect to the image
of the open cover $\mathcal{U}(\{K,L\})$.
By induction, this spectral sequence $E_{\bullet,\bullet}^*[G/L]$ converges to a free infinity page,
and the corresponding homology is generated by products of basic classes.

The $E^2$-page of our original spectral sequence is hence given by the direct sum of the two
bigraded modules $E^2[G/L]$ and $E^\infty[K]$, which differs from $E^2[G/L]$ only in the zeroth
column.
We will now show that for each $2\le\ell\le\infty$ the $E^\ell$-page is the direct sum of
$E^\ell[G/L]$ and $E^\infty[K]$.

\vspace{1em}

For $p>0$ and $q\ge 0$ look at the map $d_2$ starting in $E^2_{p,q}$.
This map is constructed by representing each class in $E^2_{p,q}$ on the chain level (i.e.\ on the
$E^0$-page), mapping it via the horizontal boundary map to $E^0_{p-1,q}$, lifting it to
$E^0_{p-1,q+1}$ and applying the horizontal map again, landing in $E^0_{p-2,q+1}$.
The element of $E^2_{p-2,q+1}$ represented by this cycle is the image of the class we started with
under $d_2$.
The lifting of the particles in $L$ always connects pairs of distinct orderings of particles on
$e$ via a path through the central vertex of $L$.
The end result does not depend on the choice of such a lift, so we always take the following one:
choose (once and for all) two leaves $e_1, e_2$ of $L$ that are different from $e$, then
connecting two orderings $\nu\neq\nu'$ of a $S=\{s_1,\ldots,s_m\}$ is given by starting with the
configuration $\nu$ on $e$, sliding all particles between $s_1$ and the central vertex to $e_2$,
moving $s_1$ to $e_1$, moving the other particles back to $e$ and repeating this for all particles
$s_2,\ldots, s_m$.
Repeating the same for $\nu'$ we get two paths which glued together give a path $\gamma[\nu,\nu']$
between the two configurations.

By construction it is clear that $\gamma[\nu,\nu'] + \gamma[\nu',\nu''] = \gamma[\nu,\nu'']$, so the
only closed loop arising in such a way is the trivial path.
The construction of the image of a class under $d_2$ as described above produces segments
$\gamma[\nu,\nu']$ adding up to a cycle, which hence must be trivial.
This shows that $d_2$ maps to zero in $E^\infty[K]$ and hence that $E^3 \cong E^3[G/L] \oplus
{E^\infty[K]}$.
By the same reasoning, this is true for all pages, proving that
\[
  E^\infty \cong E^\infty[G/L] \oplus {E^\infty[K]}.
\]

In conclusion, the $E^\infty$-page is torsion-free and the corresponding homology is generated by
products of basic classes.

\begin{proof}[{Proof of \autoref{thm:trees}}]
  For graphs with precisely one vertex of valence at least three and any subset of the vertices of
  valence 1 turned into sinks the theorems follow from \autoref{prop:tree-base-case}.
  By induction on the number of essential vertices, we then use the calculation of the spectral
  sequence above to prove this for any graph as in the statement of the two theorems with any subset
  of the vertices of valence 1 turned into sinks.
  In particular, this proves the statement for the case where none of the vertices are sinks.
\end{proof}

\section{Configurations of particles in general finite graphs}\label{sec:general-graph}

In this section we prove that the \emph{first} homology of configuration spaces of graphs with rank
at least one is generated by basic classes.
In contrast to the case of trees with loops, we prove that in general
the higher homology groups are \emph{not} generated by products of 1-classes.

\subsection{The first homology of configurations in general
graphs}\label{sec:first-homology-general-graph}
For a graph $G$, we choose distinct edges $e_1, \ldots, e_\ell$ such that cutting those edges in the
middle yields a tree.
Fix identifications of $[0,1]$ with each of the $e_i$ and denote for $x\in[0,1]$ by $x_{e_i}$ the
corresponding point on the edge $e_i$.
Then, define the tree $K$ as
\[ K = G - \bigcup_{1\le i\le \ell} [1/3, 2/3]_{e_i}, \]
where $[1/3,2/3]_{e_i} = \{ x_{e_i} \,|\, x \in [1/3, 2/3] \}$.
The idea is now to start with the configuration space of $K$ embedded into the configuration space
of $G$ and to release the particles into the bigger graph $G$ one at a time.

\vspace{1em}

For $\Gamma = (G,K)$ recall the definition of $\ConfSink{S,T}(\Gamma,W)$
(\autoref{def:configurations-of-tuples}).
We will prove that $H_1(\ConfSink{S,T}(\Gamma,W))$ is always generated by basic classes.
The second part of \autoref{thm:non-product-general-graph} will be proven in the next section.
We will again proceed by constructing an open cover and investigating the Mayer-Vietoris spectral
sequence.

\vspace{1em}

Let $\ConfSink{S,T}(\Gamma, W)$ with $S-T$ non-empty be given, then choose an arbitrary element
$s\in S-T$ and construct the following open cover:
for each $i$, define two open subsets $U_{+e_i}$ and $U_{-e_i}$ of $\ConfSink{S,T}(\Gamma,W)$ by

\begin{align*}
  U_{+e_i} &= \left\{ f\colon S\to G \,|\, \text{$f(s)\not\in [1/3, 2/3]_{e_j}$ for
  $j\neq i$ and $f(s)\neq 2/3_{e_i}$} \right\}\\
  U_{-e_i} &= \left\{ f\colon S\to G \,|\, \text{$f(s)\not\in [1/3, 2/3]_{e_j}$ for
  $j\neq i$ and $f(s)\neq 1/3_{e_i}$} \right\}.
\end{align*}

\begin{figure}[htpb]
  \centering
  \begin{tikzpicture}[line width = .7pt]
    \draw[-] plot [smooth] coordinates {(2, 0) (2.6, .2) (2.8, 0) (2.6, -.2) (2, 0)};

    \draw[pattern=north east lines,pattern color=lightgray] plot [smooth cycle] coordinates {
      (-4, 0)
      (-3, 1)
      (0.7, 1.2)
      (1, 0.2)
      (1.4, 1.1)
      (2.6, 0)
      (1.4, -1.1)
      (0, -.3)
      (-1.4, -1.1)
      (-3, -1)
    };

    \draw[-] (-2,0) -- (2,0);
    \draw[-] plot [smooth] coordinates {(-2, 0) (-1, .6) (0, .8) (1, .6) (2, 0)};
    \draw[-] plot [smooth] coordinates {(-2, 0) (-1, -.6) (0, -.8) (1, -.6) (2, 0)};
    \draw[-] (-2,0) -- (-3,0);
    \draw[-] (-2,0) -- (-2.7,0.7);
    \draw[-] (-2,0) -- (-2.7,-0.7);

    \node at (0, 1) {$e_1$};
    \node at (0, -1) {$e_2$};
    \node at (3.1, 0) {$e_3$};

  \end{tikzpicture}
  \caption{The part of $G$ where the particle $s$ is allowed in the open set $U_{+e_1}$, where $e_1$
  is oriented from left to right.}
  \label{fig:open-set-u-plus-e-one}
\end{figure}
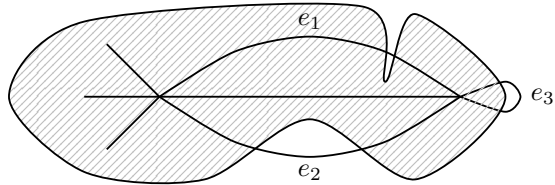

Let $T' = T\sqcup \{s\}$ and $\Gamma'=(G-[1/3,2/3]_{e_i},K)$.
\begin{prop}\label{prop:identification-intersection-general-graph}
  The intersections of those open sets can be identified as follows:
  \begin{align*}
    U_{\pm e_i} &\simeq \ConfSink{S,T'}(\Gamma, W)\\
    U_{-e_i}\cap U_{+e_i} &\simeq \ConfSink{S,T'}(\Gamma,W) \sqcup \ConfSink{S-\{s\}, T}(\Gamma',W)\\
    U_{\pm e_i}\cap U_{\pm e_j} &\simeq \ConfSink{S,T'}(\Gamma, W).
  \end{align*}
  Any intersection of at least three of those open sets is again homotopy equivalent to
  $\ConfSink{S,T'}(\Gamma,W)$.

  The inclusions induced by going from $p$-fold intersections to $(p-1)$-fold intersections are
  homotopic to the identity on the components $\ConfSink{S,T'}(\Gamma,W)$ and given by adding the
  particle $s$ to $1/2_{e_i}$ for the configurations in each component
  $\ConfSink{S-\{s\},T}(\Gamma',W)$.
  These latter components are not hit by any such inclusion.
\end{prop}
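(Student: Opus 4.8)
The plan is to analyze all of the intersections through a single principle: an intersection of some of the sets $U_{\pm e_i}$ is determined entirely by the region of $G$ in which the distinguished particle $s$ is permitted to sit, while all other particles range freely subject only to the standing constraint $f(T)\subset K$. So the first step is, for a fixed collection of chosen sets, to compute the \emph{admissible region} $R\subset G$ for $s$, namely the intersection of the loci allowed by each set. Because each set $U_{\sigma e_i}$ with $\sigma\in\{+,-\}$ forbids $s$ from every middle third $[1/3,2/3]_{e_j}$ with $j\neq i$ and additionally deletes a single endpoint of the middle third of $e_i$, a short bookkeeping argument yields three cases: if the chosen sets involve two or more distinct indices then $R=K$; if they involve a single index $i$ with only one sign then $R=K\cup[1/3,2/3)_{e_i}$ (or its mirror), which deformation retracts onto $K$ by collapsing the half-open overflow segment to $1/3_{e_i}$; and if they are exactly the pair $U_{+e_i},U_{-e_i}$ then $R=K\sqcup(1/3,2/3)_{e_i}$ is \emph{disconnected}.

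The second step is to promote each retraction of $R$ to a deformation retraction of the corresponding intersection of configuration spaces. When $R$ retracts onto $K$, I would push $s$ out of the overflow segment $[1/3,2/3)_{e_i}$ across $1/3_{e_i}$ into $K$, identifying the intersection with $\ConfSink{S,T'}(\Gamma,W)$ for $T'=T\sqcup\{s\}$. This single move handles $U_{\pm e_i}$, every mixed-index double intersection $U_{\pm e_i}\cap U_{\pm e_j}$, and every intersection of three or more sets at once, since such intersections necessarily involve two distinct indices (there are only two sets per index, so three distinct sets force a second index). For the remaining case $U_{+e_i}\cap U_{-e_i}$, the two components of $R$ give the two summands: on the $K$-component the same push-into-$K$ retraction yields $\ConfSink{S,T'}(\Gamma,W)$, while on the component where $s\in(1/3,2/3)_{e_i}$ the particle is genuinely trapped in the middle third; there I would slide $s$ to the midpoint $1/2_{e_i}$, sweep every other particle out of $(1/3,2/3)_{e_i}$ to the two sides, and then forget $s$, identifying this piece with $\ConfSink{S-\{s\},T}(\Gamma',W)$ for $\Gamma'=(G-[1/3,2/3]_{e_i},K)$.

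The main obstacle is making these retractions well defined on configuration space rather than merely on $G$: pushing $s$ leftward could in principle collide it with another particle lying between $s$ and $1/3_{e_i}$. The structural fact that resolves this is that particles of $T$ are confined to $K$ and hence never occupy an \emph{open} middle third, so the only particles obstructing $s$ on $[1/3,2/3)_{e_i}$ come from $S-T$; I would therefore define the homotopy to slide the entire ordered train of $S-T$ particles on the overflow segment monotonically off the left end, preserving their order and spacing as they pass $1/3_{e_i}$ and spread onto the adjacent edge of $K$. This keeps the motion collision-free and continuous, and the symmetric construction clears $(1/3,2/3)_{e_i}$ in the trapped case. Verifying the spacing and ordering bookkeeping is the routine but delicate part of the argument.

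Finally, the face-map description follows by tracking these retractions through the inclusions of intersections. On every $\ConfSink{S,T'}(\Gamma,W)$ summand, the inclusion of a $p$-fold into a $(p-1)$-fold intersection agrees, after the above retraction, with the identity, since both sides are identified with $\ConfSink{S,T'}(\Gamma,W)$ by pushing $s$ into $K$; hence these maps are homotopic to the identity. The trapped summand $\ConfSink{S-\{s\},T}(\Gamma',W)$ occurs only in the double intersections $U_{+e_i}\cap U_{-e_i}$, and its identification literally adds $s$ at $1/2_{e_i}$, so including it into either $U_{+e_i}$ or $U_{-e_i}$ is exactly this add-$s$-at-$1/2_{e_i}$ map. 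That the trapped summands are \emph{not} hit by any inclusion is then immediate from the admissible-region computation: any intersection mapping onto $U_{+e_i}\cap U_{-e_i}$ comes from a triple intersection, which involves two distinct indices and so forbids $s$ from $[1/3,2/3]_{e_i}$ entirely; its admissible region is $K$, and no configuration with $s$ trapped in the middle third can lie in its image.
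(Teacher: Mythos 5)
Your proposal is correct and takes essentially the same route as the paper's proof: the same case analysis (any intersection involving two distinct indices confines $s$ to $K$ on the nose, so only the single sets $U_{\pm e_i}$ and the pair $U_{+e_i}\cap U_{-e_i}$ need retractions), the same identifications (push $s$ into $K$; for the trapped component slide $s$ to $1/2_{e_i}$, sweep the others out, and forget $s$), and the same argument that trapped components are never hit since triple intersections force $s\in K$. One small correction to your sliding homotopy: you should move \emph{all} particles between $0_{e_i}$ and $s$ (including those already sitting in $[0,1/3)_{e_i}$, possibly from $T$, which stay in $K$ under compression), not just the train on the overflow segment, since sliding that train with its spacing preserved would collide with particles already in $[0,1/3)_{e_i}$ --- this is exactly the paper's phrasing and is the routine bookkeeping you flagged.
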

\begin{proof}
  If the intersection of any number of these open sets contains open sets $U_{\pm e_i}$ and
  $U_{\pm e_j}$ for $i\neq j$ then the particle $s$ is restricted from entering all
  $[1/3,2/3]_{e_i}$, so this intersection is actually precisely the same as
  $\ConfSink{S,T'}(\Gamma,W)$.
  Since every intersection of $\ge 3$ of those sets contains two such open sets, there are only two
  cases remaining, namely 1-fold intersections and the intersection $U_{-e_i}\cap U_{+e_i}$.

  The space $U_{+e_i}$ is almost the same as $\ConfSink{S,T'}(\Gamma,W)$, the only difference is
  that the particle $s$ is also allowed in the segment $[1/3, 2/3)_{e_i}$.
  By sliding $s$ back into the interval $[0, 1/3)_{e_i}$ whenever necessary and moving all particles
  between $0_{e_i}$ and $s$ accordingly, we see that this space is homotopy equivalent to
  $\ConfSink{S,T'}(\Gamma,W)$.
  The analogous reasoning identifies $U_{-e_i}$.

  The intersection $U_{-e_i}\cap U_{+e_i}$ has two connected components:
  the component where $s$ is in $(1/3,2/3)_{e_i}$ and the one where it is in $K$.
  The second component is again on the nose equal to $\ConfSink{S,T'}(\Gamma,W)$.
  Modify the first component by a homotopy moving $s$ to $1/2_{e_i}$ and sliding all other particles
  on $e_i$ away from $s$ into the intervals $[0, 1/3)_{e_i}$ and $(2/3, 1]_{e_i}$, then forgetting
  the particle $s$ gives an identification with $\ConfSink{S-\{s\},T}(\Gamma',W)$, proving the first
  claim.

  \vspace{1em}

  By our identification above the description of the inclusion maps given by forgetting one of the
  intersecting open sets is easily deduced.
  If one of these inclusions would hit a component $\ConfSink{S-\{s\},T}(\Gamma',W)$, then
  the particle $s$ would need to be on the interval $(1/3, 2/3)_{e_i}$, which it never is for any
  triple intersection.
\end{proof}

This allows us to describe generators for the first homology of the configuration space of any
finite graph.
We formulate this as a separate proposition in order to use it for the case where $K$ is a graph
with precisely one essential vertex since this case is needed to prove \autoref{thm:trees}.
\begin{prop}\label{prop:1cycles-general-graph-induction}
  Let $G$ be a connected finite graph, $K\subset G$ a tree defined as above and $W$ a subset of the
  vertices.
  If $H_1(\ConfSink{S}(K, W))$ is generated by basic classes for all finite sets $S$ then also
  $H_1(\ConfSink{S,T}(\Gamma,W))$ is generated by basic classes for all pairs of finite sets
  $T\subset S$, where $\Gamma=(G,K)$.
\end{prop}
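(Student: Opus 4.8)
The plan is to induct on the number $|S-T|$ of particles not yet confined to the tree $K$, running the Mayer--Vietoris spectral sequence of \autoref{sec:mv-spectral-sequence} for the cover $\{U_{\pm e_i}\}$. In the base case $T=S$ we have $\ConfSink{S,S}(\Gamma,W)=\ConfSink{S}(K,W\cap K)$, whose first homology is generated by basic classes by hypothesis (the sinks of $G$ restrict to those of $K$). For the inductive step choose $s\in S-T$, form the cover, and feed the identifications of \autoref{prop:identification-intersection-general-graph} into the spectral sequence. Since $H_1(\ConfSink{S,T}(\Gamma,W))$ has associated graded pieces $E^\infty_{0,1}=F_0H_1$ and $E^\infty_{1,0}=H_1/F_0H_1$, it suffices to produce basic lifts for generators of each. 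A pleasant feature is that only a single induction on $|S-T|$ is needed: the new graph $\Gamma'=(G-[1/3,2/3]_{e_i},K)$ will enter only through $H_0$, so its homology requires no inductive input.

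For the bottom piece, $F_0H_1$ is the image of $\bigoplus_\phi H_1(U_\phi)\to H_1(\ConfSink{S,T}(\Gamma,W))$. By \autoref{prop:identification-intersection-general-graph} each $U_\phi\simeq\ConfSink{S,T'}(\Gamma,W)$ with $T'=T\sqcup\{s\}$, so $|S-T'|<|S-T|$ and the inductive hypothesis says $H_1(U_\phi)$ is generated by basic classes. Inclusion-induced maps carry basic classes to basic classes (the same embedding $\bigsqcup G_i\hookrightarrow G$ still witnesses the product after relaxing the constraint on $s$), so $F_0H_1$ is generated by basic classes.

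For $E^\infty_{1,0}$ I would analyze the row $E^1_{\bullet,0}$. By \autoref{prop:identification-intersection-general-graph} every intersection is nonempty and all face maps restrict to the identity on the $\ConfSink{S,T'}(\Gamma,W)$-summands, so the nerve is a full simplex and the subcomplex $P_\bullet$ carried by $H_0(\ConfSink{S,T'}(\Gamma,W))$ is the simplicial chain complex of that simplex with constant coefficients, hence acyclic above degree $0$. The only other summands are the components $D_i=\ConfSink{S-\{s\},T}(\Gamma',W)$ living in $U_{+e_i}\cap U_{-e_i}$; these sit in position $1$ and receive no $d_1$ from position $2$. Thus $P_\bullet$ is a subcomplex with quotient $\bigoplus_i H_0(D_i)$ concentrated in degree $1$, and the long exact sequence reads $0\to E^2_{1,0}\to\bigoplus_i H_0(D_i)\xrightarrow{\partial}H_0(\ConfSink{S,T'}(\Gamma,W))\to E^2_{0,0}\to0$, where $\partial$ sends a component to the difference of the two components obtained by sliding $s$ off $1/2_{e_i}$ to either side. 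As the differentials into and out of position $(1,0)$ vanish for $r\ge2$, we get $E^\infty_{1,0}=E^2_{1,0}=\ker\partial$.

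It remains to lift $\ker\partial$ to basic classes, and this is the main obstacle. Geometrically $\ker\partial$ is the cycle group of the graph with vertices the components of $\ConfSink{S,T'}(\Gamma,W)$ and edges the $D_i$; a cycle lifts to a loop in which $s$ crosses the middles of cut edges and returns through $K$, that is, $s$ traverses a $1$-cycle of $G$ while the remaining particles come back to their places. Since $K$ is a tree, each cut edge $e_i$ closes up to a single embedded circle $\gamma_i$, and moving $s$ once around $\gamma_i$ is a circle $1$-class (cf.\ \autoref{prop:homology-sinks}); tensoring with the $0$-class recording the other particles exhibits it as a product of basic classes. The delicate points to verify are the two ways a lift can be impure: when other particles lie on $\gamma_i$ so that $s$ must lap them, the contribution becomes a \emph{multi}-particle circle class on $\gamma_i$, still basic; and when a generator of $\ker\partial$ is a genuine combination over several cut edges, its lift is the corresponding sum of circle classes, with any reordering of the other particles performed between crossings realized by $\HH$- and star-moves at the essential vertices of $K$, again basic. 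Since both $F_0H_1$ and $H_1/F_0H_1$ are then generated by basic classes, so is $H_1(\ConfSink{S,T}(\Gamma,W))$, completing the induction.
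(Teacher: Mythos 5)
Your proposal is correct and follows essentially the same route as the paper's proof: the same induction on $|S-T|$, the same cover and identifications from \autoref{prop:identification-intersection-general-graph}, the same splitting of the row $E^1_{\bullet,0}$ into an acyclic simplex part plus the $D_i$-summands concentrated in degree one, and the same lifting of kernel elements to circle classes of $s$ around the embedded cycles $\gamma_i$. The one notable difference is that the paper dispatches your ``delicate points'' up front by assuming the configuration space of $K$ is connected (the sole exception, $G=S^1$ without sinks, being generated by basic classes by definition), so that the component graph you describe has a single vertex, every kernel generator is a single loop-edge, and one may choose a representative with all other particles fixed off $\gamma_i$ --- a reduction worth adopting, since your fallback of realizing reorderings by $\HH$- and star-moves in $K$ is vacuous precisely in the disconnected case, where $K$ has no essential vertices.
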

\begin{proof}
  We prove this by looking at the spectral sequence constructed from the open cover described above.
  To prove the statement we only need to show that moving one element out of $T$ preserves the
  property that the homology is generated by basic classes.
  We can assume that the configuration space of $K$ is connected since the only case where this is
  not true is if $G$ is $S^1$ without sinks, and this case is true by definition.
  We will now argue by induction on the number of elements in $S-T$.
  The induction start $S=T$ is precisely that $H_1(\ConfSink{S}(K,W))$ is generated by basic
  classes, so we only need to check the induction step.

  In the induction step, we only get 1-classes at $E^\infty_{0,1}$ and $E^\infty_{1,0}$.
  The module $E^\infty_{0,1}$ is a quotient of $E^1_{0,1}$, which is generated by 1-classes of
  $U_{\pm e_i}\simeq\ConfSink{S,T'}(\Gamma,W)$, so by induction by classes of the required form.

  The chain complex $E^1_{\bullet,0}$ is given by the chain complex of the nerve of the cover (which
  is a simplex) and one additional copy of $\Int$ for each intersection $U_{+e_i}\cap U_{-e_i}$.
  Restricted to $H_0(U_{-e_i}\cap U_{+e_i})\cong \Int\oplus\Int$ the face maps
  \[
    \Int\oplus\Int \cong H_0(U_{-e_i}\cap U_{+e_i}) \to H_0(U_{\pm e_i}) \cong \Int
  \]
  are given by $(x,y)\mapsto \pm(x+y)$.
  Therefore, all elements $(x, -x)$ are in the kernel of $d_1$.
  These elements correspond to $S^1$ movements of $s$ along the edge $e_i$:
  by mapping $U_{-e_i}\cap U_{+e_i}\hookrightarrow U_{-e_i}$ the particle $s$ is allowed to leave
  $(1/3,2/3)_{e_i}$ via one of the sides, connecting it to a configuration where $s$ is on the tree
  $K$.
  The other inclusion allows $s$ to leave via the other side, connecting it to that same
  configuration with $s$ on $K$.
  Mapping this to $\ConfSink{S,T}(\Gamma,W)$ yields a cycle where $s$ moves along $K$ and
  $e_i$. We can choose a representative such that all other particles are fixed and that this
  movement follows an embedded circle in $G$.

  Subtracting such kernel elements, we can modify every cycle of $(E^1_{\bullet,0}, d_1)$ such that
  it is zero in all copies of $H_0(\ConfSink{S-\{s\},T}(\Gamma',W))$.
  Since the remaining part of the chain complex is the chain complex of a simplex, there are no
  other 1-classes, concluding the argument.
\end{proof}

\begin{proof}[{Proof of \autoref{thm:non-product-general-graph} --- first homology group}]
  By \autoref{thm:trees}, the homology group $H_1(\Conf_{S}(K))$ is generated by basic
  classes for any finite tree $K$, so the theorem follows from
  \autoref{prop:1cycles-general-graph-induction}.
\end{proof}

\subsection{Non-product generators}
In this section, we describe an example of a homology class of the configuration space of a graph
that cannot be written as a sum of product classes.

The easiest example we were able to find so far is a 2-class of $\Conf_3\left( B_3 \right)$, where
$B_3$ is the banana graph of rank three, i.e.\ two vertices $v,w$ connected via four edges, see
\autoref{fig:banana-graph-with-star-graph}.

\vspace{1em}

To construct the class, we first construct classes in $\Conf_2(\Star_4)$.
Let $S\subset\mathbf{3}$ be a set of two particles, then the first homology group of
$\Conf_S(\Star_3)$ is one-dimensional, a generator can be represented by a sum of twelve edges, each
with coefficient +1:
start with both particles on different edges, then in turns move the particles to the free edge
until the initial configuration is restored.

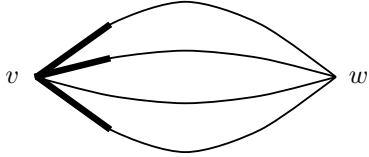
\begin{figure}[htpb]
  \centering
  \begin{tikzpicture}[line width = .7pt]
    \draw[-] plot [smooth] coordinates {(-2, 0) (-1, 0.7) (0, 1) (1, 0.7) (2, 0)};
    \draw[-] plot [smooth] coordinates {(-2, 0) (-1, 0.25) (0, 0.35) (1, 0.25) (2, 0)};
    \draw[-] plot [smooth] coordinates {(-2, 0) (-1, -0.25) (0, -0.35) (1, -0.25) (2, 0)};
    \draw[-] plot [smooth] coordinates {(-2, 0) (-1, -0.7) (0, -1) (1, -0.7) (2, 0)};
    \node at (-2.3, 0) {$v$};
    \node at (2.3, 0) {$w$};

    \draw[-, line width=2.5pt] plot [smooth] coordinates {(-1.98, 0) (-1, 0.7)};
    \draw[-, line width=2.5pt] plot [smooth] coordinates {(-2.008, 0) (-1, 0.25)};
    \draw[-, line width=2.5pt] plot [smooth] coordinates {(-1.97, 0) (-1, -0.7)};
  \end{tikzpicture}
  \caption{Including $\Star_3$ into the banana graph $B_3$ at $v$ in one of four ways.}
  \label{fig:banana-graph-with-star-graph}
\end{figure}

Now choose a bijection of $\mathbf{3}$ with the leaves of $\Star_3$ and $\mathbf{4}$ with the leaves
of $\Star_4$.
This defines four 1-cycles in $\Conf_S(\Star_4)$ by including $\Star_3$ into $\Star_4$ in all
order-preserving ways (with respect to these identifications).
Now we add those four cycles together with the following signs:
each inclusion of $\Star_3$ is determined by the edge $i\in\mathbf{4}$ that is missed.
The 1-cycle corresponding to this $i$ gets the sign $(-1)^i$.
This sum is actually equal to zero:

The 1-cells of these cycles are given by one particle moving from one edge to the central vertex and
the other particle sitting on another edge.
Each such cell appears precisely twice, once for each way of choosing a third edge from the
remaining two leaves.
If these two remaining leaves are cyclically consecutive in $\mathbf{4}$ the corresponding cycles
have different signs, otherwise, these two cells inside the 1-cycles appear with different signs, so
in both cases, they add up to zero.

Including $\Star_4$ into $B_3$ (mapping the central vertex to $v$) gives a sum of four 1-cycles
coming from embedding $\Star_3$ into $B_3$ in different ways (see
\autoref{fig:banana-graph-with-star-graph}), which evaluates to zero.

\vspace{1em}

Now let $t$ be the third particle, i.e.\ $S\sqcup \{t\} = \mathbf{3}$, then take for each of those
four 1-cycles in $\Conf_S(B_3)$ the product of the cycle with the 1-cell moving particle $t$ from the
remaining one of the four edges to the vertex $v$.

Doing this construction for all three choices of $S$ gives a sum of 144 2-cells, and the claim is
that this is, in fact, a 2-cycle in the combinatorial model of the configuration space.
We can think of this cycle as 12 cylinders of a 1-cycle in the star of $v$ multiplied with another
particle moving to the other vertex $w$, whose boundary 1-cells get identified in a certain way, see
\autoref{fig:banana-graph-with-two-cell}.

Let $t\in\mathbf{3}$, then one part of the boundary of four of those cylinders is given by the
1-cycles of the particles $\mathbf{3}-\{t\}$ with $t$ sitting on $w$.
By construction, those four 1-cycles add up to zero.

It remains to investigate the parts where the third particle is in the middle of the edge.
These 1-cells are precisely given by two particles sitting in the middle of two edges and a third
particle moving from another edge to $v$.
Each such cell appears precisely twice: once for every choice of which one of the fixed particles
moves to $w$ and which one belongs to the star movement.
By analogous reasoning, these two occurrences have opposite signs, so the total contribution is
zero.

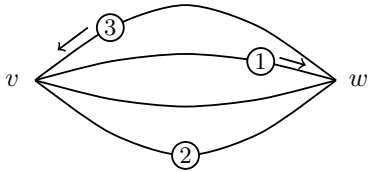
\begin{figure}[htpb]
  \centering
  \begin{tikzpicture}[line width = .7pt]
    \draw[-] plot [smooth] coordinates {(-2, 0) (-1, 0.7) (0, 1) (1, 0.7) (2, 0)};
    \draw[-] plot [smooth] coordinates {(-2, 0) (-1, 0.25) (0, 0.35) (1, 0.25) (2, 0)};
    \draw[-] plot [smooth] coordinates {(-2, 0) (-1, -0.25) (0, -0.35) (1, -0.25) (2, 0)};
    \draw[-] plot [smooth] coordinates {(-2, 0) (-1, -0.7) (0, -1) (1, -0.7) (2, 0)};
    \draw[->] (-1.3, .7) -- (-1.7, 0.4);
    \draw[->] (1.25, .3) -- (1.6, 0.2);
    \node at (-2.3, 0) {$v$};
    \node at (2.3, 0) {$w$};
    \particleNr{(1, 0.25)}{1}
    \particleNr{(-1, 0.7)}{3}
    \particleNr{(0, -1)}{2}
  \end{tikzpicture}
  \caption{Each of the twelve cylinders making up the cycle is given by twelve two cells of this
  form, where all particles are on different edges.}
  \label{fig:banana-graph-with-two-cell}
\end{figure}

\vspace{1em}

Thus, the boundary cells of the twelve cylinders add up to zero, yielding a non-trivial cycle.
By the dimension of our combinatorial model, there are no three-cells, so this does not represent
the zero class.
Notice that there are no product classes since every $S^1$ generator uses both vertices and there
are too few particles for two $\HH$-classes or star classes.
By looking at the identifications and calculating the Euler characteristic, one sees that the
resulting cycle is, in fact, a closed surface of genus 13 embedded into the combinatorial model of
the configuration space.
In fact, by pushing in 2-cells where strictly less than three edges are involved (starting with
those involving only one edge, followed by those involving precisely two edges) and afterward
pushing in the 1-dimensional intervals where particles move to an occupied edge it is
straightforward to show the following:
\begin{prop}
  $\Conf_3(B_3)$ is homotopy equivalent (equivariantly with respect to the action of the symmetric
  group $\Sigma_3$) to a closed surface of genus 13.
\end{prop}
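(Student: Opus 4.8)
The plan is to work entirely inside the combinatorial square complex of \autoref{prop:combinatorial-model-sinks}. Since both vertices $v,w$ of $B_3$ have valence four and there are no sinks, that proposition gives $|V_{\ge 2}|=2$ and $|E_W|=0$, so $\Conf_3(B_3)$ is homotopy equivalent to a cube complex $X$ of dimension $\min\{3,2\}=2$; moreover the retraction of \autoref{prop:combinatorial-model-sinks} is defined symmetrically in the three particles and is therefore $\Sigma_3$-equivariant. First I would record the combinatorial bookkeeping: a $0$-cell places each of the three labelled particles either at $v$, at $w$, or (with a linear order) in the interior of one of the four edges; a $1$-cell is a single particle moving between a vertex and its outermost rest position on an incident edge; and a $2$-cell consists of two such moves performed independently. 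Because $B_3$ has exactly two vertices and the two moving particles of a square must target distinct vertices, every $2$-cell moves one particle to $v$ and one to $w$ while the third particle sits on some edge.

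The core of the argument is to collapse $X$ onto the subcomplex $\Sigma$ spanned by the \emph{generic} squares, those in which the two moving particles and the static particle occupy three pairwise distinct edges. Following the sketch preceding the statement, I would realise this as an equivariant sequence of elementary collapses: first push in the squares in which the static particle shares an edge with one of the movers (so that only two edges are ``involved''), each of which has a free face, and then push in the leftover $1$-cells recording a particle moving onto an already occupied edge. Every step is specified purely by combinatorial type, so the collapses come in $\Sigma_3$-orbits and the resulting deformation retraction $X\to\Sigma$ is equivariant. A direct count shows $\Sigma$ has exactly $144$ top cells: choose which particle goes to $v$, which to $w$, and which stays ($3!$ role assignments), then the edge used by the $v$-mover ($4$), by the $w$-mover ($3$), and by the static particle ($2$), giving $6\cdot 4\cdot 3\cdot 2=144$, matching the $144$ two-cells of the class constructed above.

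Next I would verify that $\Sigma$ is a closed surface. The key local check is that each $1$-cell of $\Sigma$ is a face of exactly two generic squares: a given move of one particle to a vertex can be completed to a generic square in precisely two ways, according to which of the remaining two particles supplies the partner move to the other vertex (or, when both non-moving particles already lie on edges, which of them moves while the other stays). The same analysis shows that the $2$-cells around each $0$-cell close up into a single cycle, so every vertex link is a circle and $\Sigma$ is a $2$-manifold. Orientability — hence that $\Sigma$ has a genus rather than being non-orientable — follows from the explicit fundamental class constructed before the statement: the signed sum of the $144$ squares was shown to have vanishing boundary, which is exactly a coherent orientation of the faces. Connectedness of $\Sigma$ is inherited from that of $\Conf_3(B_3)$.

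Finally I would compute the Euler characteristic. Each square has four edges and each edge lies in two squares, so $\Sigma$ has $144\cdot 4/2=288$ edges. The $0$-cells occurring as corners fall into four types — one particle at $v$, one at $w$, one on an edge ($24$ of them); three particles on three distinct edges ($24$); one particle at $v$ with two on distinct edges ($36$); and the mirror type at $w$ ($36$) — giving $120$ vertices, with the consistency check that the corner incidences total $24\cdot 6+24\cdot 6+36\cdot 4+36\cdot 4=576=144\cdot 4$. Hence $\chi(\Sigma)=120-288+144=-24$, and $\chi=2-2g$ yields $g=13$. The main obstacle is the combinatorial verification underlying the middle two paragraphs: one must check that every non-generic cell can be removed through a free face in an order compatible with the $\Sigma_3$-action, and that the closed-surface condition holds at each of the four vertex types — in particular that every link is a single circle rather than a disjoint union of several.
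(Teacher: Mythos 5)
Your proposal is correct and takes essentially the same route as the paper: the paper also collapses the two-dimensional cube complex onto the subcomplex of the $144$ ``generic'' squares (pushing in the 2-cells involving fewer than three edges, then the 1-cells where a particle moves onto an occupied edge) and identifies that subcomplex as a closed orientable surface via the signed 2-cycle and an Euler characteristic count. Your version simply makes explicit the bookkeeping the paper declares straightforward --- the counts $144$, $288$, $120$ giving $\chi=-24$, the free-face collapses, the link-is-a-circle check, and equivariance of the retraction.
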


\begin{rem}
  In between versions of this paper, Wiltshire-Gordon independently showed this homotopy equivalence
  using explicit computer calculations of the groups $H_*(\Conf_3(B_3))$, see \cite[Example 2.1, p.
  4]{WiGo17}.
\end{rem}

\vspace{1em}

We now prove the rest of \autoref{thm:non-product-general-graph}, whose first part was proven in
\autoref{sec:first-homology-general-graph}.
\begin{proof}[{Proof of \autoref{thm:non-product-general-graph} --- non-product classes}]
  A counterexample for the second homology group was described above, all that remains is to
  describe how to use this to construct counterexamples for higher homology groups.

  By adding $k$ disjoint $S^1$ graphs, connecting each of them to $v$ via a single edge and adding
  $k$ particles we can take the product of this non-product cycle with the $k$-cycle given by the
  product of the $k$ particles moving inside the $S^1$'s.
  This gives a  class in the $(k+2)$-nd homology group of the configuration space of $k+3$ particles
  in this graph, which by analogous reasoning cannot be written as a sum of product classes.
  This shows that this phenomenon appears in every homology degree except for the zeroth and first.
\end{proof}

\bibliographystyle{halpha}
\bibliography{conf-graph}

\begin{thebibliography}{Ram17}

\bibitem[Abr00]{Abrams00}
Aaron~David Abrams.
\newblock {\em Configuration Spaces and Braid Groups of Graphs}.
\newblock PhD thesis, UC Berkley, 2000.

\bibitem[AG02]{AbramsGhrist02}
Aaron Abrams and Robert Ghrist.
\newblock Finding topology in a factory: configuration spaces.
\newblock {\em Amer. Math. Monthly}, 109(2):140--150, 2002.

\bibitem[BF09]{BF09}
Kathryn Barnett and Michael Farber.
\newblock Topology of configuration space of two particles on a graph. {I}.
\newblock {\em Algebr. Geom. Topol.}, 9(1):593--624, 2009.

\bibitem[BH99]{bh10}
Martin~R.\! Bridson and André Häfliger.
\newblock {\em Metric Spaces of Non-Positive Curvature}.
\newblock Springer, 1999.

\bibitem[Che16]{Chettih16}
Safia Chettih.
\newblock {\em Dancing in the Stars: Topology of Non-k-equal Configuration
  Spaces of Graphs}.
\newblock PhD thesis, University of Oregon, 2016.

\bibitem[CW04]{Crisp04}
John Crisp and Bert Wiest.
\newblock Embeddings of graph braid and surface groups in right-angled {A}rtin
  groups and braid groups.
\newblock {\em Algebr. Geom. Topol.}, 4:439--472, 2004.

\bibitem[FH10]{FH10}
Michael Farber and Elizabeth Hanbury.
\newblock Topology of configuration space of two particles on a graph, {II}.
\newblock {\em Algebr. Geom. Topol.}, 10(4):2203--2227, 2010.

\bibitem[FS05]{Farley05}
Daniel Farley and Lucas Sabalka.
\newblock Discrete {M}orse theory and graph braid groups.
\newblock {\em Algebr. Geom. Topol.}, 5:1075--1109, 2005.

\bibitem[Ghr01]{Ghrist01}
Robert Ghrist.
\newblock Configuration spaces and braid groups on graphs in robotics.
\newblock In {\em Knots, braids, and mapping class groups---papers dedicated to
  {J}oan {S}. {B}irman ({N}ew {Y}ork, 1998)}, volume~24 of {\em AMS/IP Stud.
  Adv. Math.}, pages 29--40. Amer. Math. Soc., Providence, RI, 2001.

\bibitem[Lü14]{Luetgehetmann14}
Daniel Lütgehetmann.
\newblock Configuration spaces of graphs.
\newblock Master's thesis, Freie Universität Berlin, 2014.

\bibitem[MS16]{MaSa16}
Tomasz Maci\c{a}\.{z}ek and Adam Sawicki.
\newblock Homology groups for particles on one-connected graphs.
\newblock 2016, arXiv:1606.03414.

\bibitem[Ram17]{Ramos17}
Eric Ramos.
\newblock Configuration spaces of graphs with certain permitted collisions.
\newblock {\em arXiv}, math.AT, March 2017, arXiv:1703.05535.

\bibitem[{\'S}wi01]{Swiat01}
Jacek {\'S}wi{\polhk{a}}tkowski.
\newblock Estimates for homological dimension of configuration spaces of
  graphs.
\newblock {\em Colloq. Math.}, 89(1):69--79, 2001.

\bibitem[WG17]{WiGo17}
John~D. Wiltshire-Gordon.
\newblock Models for configuration space in a simplicial complex, 2017,
  arXiv:1706.06626.

\end{thebibliography}
\end{document}